% !Mode:: "TeX:UTF-8"
\documentclass[12pt, reqno]{amsart}
\usepackage{amsmath} \setlength{
\textwidth}{6.4 in} \setlength{
\textheight}{8.9 in} \hoffset=-56pt

\usepackage{amsmath, amsfonts, rawfonts,amssymb,txfonts, enumerate}
\usepackage{eucal}
\usepackage{latexsym}
\usepackage{cite, color}
 \usepackage{bm}
\usepackage{appendix}

\newcommand{\rnnn}{\mathbb R^{n}}
\newcommand{\m}{\mathbb R^{n-1}}

\newcommand{\sn}{ {\mathbb{S}^{n-1}}}

\newcommand{\psum}{{+_{\negthinspace\kern-2pt p}}\,}
\newcommand{\qsum}[1]{{+_{\negthinspace\kern-2pt #1}}\,}
\newcommand{\dpsum}{{\tilde+_{\negthinspace\kern-1pt p}}\,}
\newcommand{\dqsum}[1]{{\tilde+_{\negthinspace\kern-1pt #1}}\,}
\newcommand{\lsub}[1]{\hskip -1.5pt\lower.5ex\hbox{$_{#1}$}}

\numberwithin{equation}{section}

\newtheorem{theo}{Theorem}[section]

\newtheorem{lem}[theo]{Lemma}
\newtheorem{prop}[theo]{Proposition}

 \theoremstyle{definition}

\begin{document}

\title{Some remarks on a class of logarithmic curvature flow}

\author[J. Hu]{Jinrong Hu}
\address{School of Mathematics, Hunan University, Changsha, 410082, Hunan Province, China}
\email{hujinrong@hnu.edu.cn}

\author[Q. Mao]{Qiongfang Mao}
\address{School of Mathematics, Hunan University, Changsha, 410082, Hunan Province, China}
\email{maoqiongfang@hnu.edu.cn}

\subjclass[2010]{35K55, 52A38.}

\begin{abstract}
In this paper, we introduce a class of new logarithmic curvature flow. The flows are designed to embrace the monotonicity of the related functional, and the convergence of this flow would tackle the solvability of the weighted Christoffel-Minkowski problem, but a full proof scheme is missing, the key factor of forming this phenomenon lies in the establishment of the upper bound of the principal curvature, which essentially depends on finding a clean condition on smooth positive function defined on the unit sphere $\sn$. Except for obtaining this tricky estimate, we get all the other a priori estimates and hope that this note can attract wide attention to this interesting issue.
\end{abstract}
\keywords{Christoffel-Minkowski problem, logarithmic curvature flow }

\maketitle

\baselineskip18pt

\parskip3pt

\section{Introduction}
\label{Sec1}
The Christoffel-Minkowski problem is the problem of finding a convex hypersurface with its $k$-th area measure prescribed on $\sn$. If the boundary of the convex body is smooth, this problem leads to finding convex solutions of the nonlinear Hessian equation
\begin{equation}\label{CR}
\sigma_{k}(\nabla^{2}_{\sn}h+hI)(x)=\frac{1}{f(x)},\quad {\rm on} \ \sn,
\end{equation}
where $h:\sn \rightarrow (0,\infty)$ is the unknown function, $\nabla^{2}_{\sn}h$ is the spherical Hessian of $h$ with respect to a local orthonormal frame on $\sn$, $I$ is the identity matrix, $\sigma_{k}$ is the $k$-th elementary symmetric function for principal curvature radii and $f$ is a positive smooth function. For simplicity, in the subsequence, we abbreviate $\sigma_{k}(\nabla^{2}_{\sn}h+hI)(x)$ as $\sigma_{k}(x)$.

In view of \eqref{CR}, if $k=1$, it reduces to the Christoffel problem. The early proof schemes in this circumstance were  obtained by Christoffel\cite{Cr85}, Hurwitz\cite{Hu02}, Hilbert\cite{Hil53}, S\"{u}ss\cite{S33}. The other special case of \eqref{CR} is $k=n-1$, which is called as the Minkowski problem. This situation has been dealt with by the search of Minkowski\cite{M897}, Aleksandrov \cite{A42}, Nirenberg\cite{N53}, Pogorelov\cite{P78} and Cheng-Yau\cite{CY76}. For $1<k<n-1$, the treatment of intermediate cases become very tricky. Corresponding to $1<k<n-1$, the solvability of \eqref{CR} was first essentially attacked by Guan-Ma\cite[Theorem 1.3]{GM03} with the assistance of a delicate constant rank theorem under the following assumptions:
 \begin{itemize}
  \item [(i)]
\[
\int_{\sn}\frac{h}{f(x)}dx=0,\]

\item [(ii)] \[
\nabla^{2}_{\sn}f^{\frac{1}{k}}+f^\frac{1}{k}I\geq 0,\]

\item [(iii)] There is a continuous (with respect to the $C^{\infty}$-norm) homotopy from $\frac{1}{f}$ to the one constant function.

    \end{itemize}
Notice that condition (i) is also necessary own to the translation invariance of this problem, condition (ii) is sufficient but not necessary, and condition (iii) was removed by a direct verification showed  in Sheng-Trudinger-Wang\cite{STW04}.

Different from the elliptic technique adopted by \cite{GM03} and \cite{STW04}, Bryan-Ivaki-Scheuer \cite{BIS23} recently reprove  the regular  Christoffel-Minkowski problem via a curvature flow involving a global term in the premise that the above assumptions (i) and (ii) hold. In this paper, inspired by Chou-Wang \cite{CW00}, we introduce a class of new logarithmic curvature flow,  employing a family of convex hypersurfaces $\partial \Omega_{t}$ parameterized by smooth map $X(\cdot,t):\sn\rightarrow \rnnn$ satisfying the following flow equation,
\begin{equation}\label{GN}
\frac{\partial X(x,t)}{\partial t}={\rm log}\left( \sigma_{k}(x,t)f(v) \right)v.
\end{equation}
To our knowledge, the utilization of logarithmic curvature flows for tackling the solvability of Minkowski problems is little except in case of the classical Minkowski problem ($k=n-1$) \cite{CW00}. The key ingredient of forming this phenomenon may lie in the establishment of a prior estimates. Our aim is to fill this gap and enrich this topic to some extent.

Using the flow \eqref{GN}, we are devoted to dealing with the solvability of the following regular weighted Christoffel-Minkowski problem,
\begin{equation}\label{WC}
\sigma_{k}(x)=\frac{1}{f(x)e^{\xi\cdot x}}, \ {\rm on} \ \sn,
\end{equation}
where $\xi$ is a point of $\rnnn$.

The existence of the solution to \eqref{WC} relies on a priori estimates of solution to \eqref{GN}. The main obstacle is to establish the upper bound of the principal curvature, which essentially depends on finding a clean condition on $f$. Except for obtaining this tricky estimate, we get all the other a priori estimates, which include the $C^{0}$, $C^{1}$ estimates, the lower bound of the principal curvature, the lower and upper bounds of $\sigma_{k}$.

\section{Preliminaries}

\label{Sec2}
In this section, we collect some basic knowledge on $k$-th Hessian operators.

The $k$-th elementary symmetric function for $\lambda=(\lambda_{1},\cdots,\lambda_{n-1})\in \m$ is defined as
\[
\sigma_{k}(\lambda)=\sum_{1\leq i_{1}<i_{2}<\cdots<i_{k}\leq n-1}\lambda_{i_{1}}\lambda_{i_{2}}\cdots\lambda_{i_{k}},\quad 1\leq k \leq n-1.
\]
Let $\Theta_{n-1}$ be the set of all symmetric $(n-1)\times (n-1)$ metrics. The $k$-th elementary symmetric function for $A\in \Theta_{n-1}$ is
\[
\sigma_{k}(A)=:\sigma_{k}(\lambda(A)),\quad \lambda(A) \ {\rm is \ the \ eigenvalue \ of} \ A.
\]
We say $A\in \Theta_{n-1}$ belongs to $\Gamma_{k}$ if its eigenvalue $\lambda(A)\in \Gamma_{k}$.
The Garding cone $\Gamma_{k}$ is defined as
\[
\Gamma_{k}=\{\lambda\in \m|\sigma_{i}(\lambda)> 0, \ {\rm for } \ 1\leq i\leq k\}.
\]
Here we denote by $\sigma_{k}(\lambda|i)$ the symmetric function with $\lambda_{i}=0$. Now, we list some standard formulas and properities of elementary symmetric functions that we shall use in what follows.
\begin{prop}
Let $\lambda=(\lambda_{1},\cdots, \lambda_{n-1})\in \m$ and $k=0,1,\cdots, n-1$. Then,
\begin{enumerate}
\item[(i)] $\sigma_{k+1}(\lambda)=\sigma_{k+1}(\lambda|i)+\lambda_{i}\sigma_{k}(\lambda|i), \quad \forall 1\leq i\leq n-1$.
\item[(ii)] $\sum_{i=1}^{n-1}\lambda_{i}\sigma_{k}(\lambda|i)=(k+1)\sigma_{k+1}(\lambda)$.

\item[(iii)]$\sum_{i=1}^{n-1}\sigma_{k}(\lambda|i)=(n-k-1)\sigma_{k}(\lambda)$.

\item[(iv)] $\frac{\partial \sigma_{k+1}(\lambda)}{\partial \lambda_{i}}=\sigma_{k}(\lambda|i)$.

\end{enumerate}
\end{prop}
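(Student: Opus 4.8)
The plan is to derive all four identities from the elementary generating-function identity
\[
P(t):=\prod_{j=1}^{n-1}(1+t\lambda_{j})=\sum_{m=0}^{n-1}\sigma_{m}(\lambda)\,t^{m},
\]
together with its deleted analogue $P_{i}(t):=\prod_{j\neq i}(1+t\lambda_{j})=\sum_{m=0}^{n-2}\sigma_{m}(\lambda|i)\,t^{m}$, both of which follow by directly expanding the products. Throughout I adopt the convention $\sigma_{m}=0$ and $\sigma_{m}(\lambda|i)=0$ whenever the subscript falls outside the admissible range, so that the identities below hold uniformly in $i$ and $k$.

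For (i), since $P(t)=(1+t\lambda_{i})P_{i}(t)$, comparing the coefficient of $t^{k+1}$ on the two sides gives $\sigma_{k+1}(\lambda)=\sigma_{k+1}(\lambda|i)+\lambda_{i}\sigma_{k}(\lambda|i)$ for every $1\leq i\leq n-1$; this can equally well be seen by splitting the $(k+1)$-element index subsets of $\{1,\dots,n-1\}$ into those avoiding $i$ and those containing $i$. For (iv), I would differentiate $P(t)$ with respect to $\lambda_{i}$: from $\partial P/\partial\lambda_{i}=t\,P_{i}(t)$, matching the coefficient of $t^{k+1}$ yields $\partial\sigma_{k+1}(\lambda)/\partial\lambda_{i}=\sigma_{k}(\lambda|i)$; equivalently, this is immediate from (i), since neither $\sigma_{k+1}(\lambda|i)$ nor $\sigma_{k}(\lambda|i)$ depends on $\lambda_{i}$.

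For (ii), I would differentiate $P(t)$ with respect to $t$: on one hand $P'(t)=\sum_{m}m\,\sigma_{m}(\lambda)\,t^{m-1}$, while on the other, by the product rule, $P'(t)=\sum_{i=1}^{n-1}\lambda_{i}P_{i}(t)=\sum_{m}\big(\sum_{i=1}^{n-1}\lambda_{i}\sigma_{m}(\lambda|i)\big)t^{m}$; comparing the coefficient of $t^{k}$ gives $\sum_{i=1}^{n-1}\lambda_{i}\sigma_{k}(\lambda|i)=(k+1)\sigma_{k+1}(\lambda)$. For (iii), I would argue combinatorially: $\sum_{i=1}^{n-1}\sigma_{k}(\lambda|i)$ counts, with multiplicity, the pairs $(S,i)$ with $S\subset\{1,\dots,n-1\}$, $|S|=k$, and $i\notin S$, and for each fixed such $S$ there are exactly $(n-1)-k$ admissible indices $i$, so the sum equals $(n-k-1)\sigma_{k}(\lambda)$; one can also deduce (iii) by summing identity (i) over $i$ and using (ii). I do not expect any genuine obstacle in this proposition: every step is a one-line coefficient comparison or a short counting argument, and the only point requiring minor care is the consistent use of the boundary conventions on $\sigma_{m}$ and $\sigma_{m}(\lambda|i)$, which keeps (i) and (iv) valid uniformly in $i$ and $k$.
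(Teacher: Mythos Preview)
Your proof is correct. The paper, however, does not prove this proposition at all: it is stated without argument as standard background (``we list some standard formulas and properties of elementary symmetric functions''). So there is no approach to compare against; your generating-function and counting arguments simply supply the details the paper omits.
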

\begin{prop}[Newton-Maclaurin inequality] For $\lambda\in \Gamma_{k}$ and $1\leq l \leq k\leq n-1$, we have
\[
\left(\frac{\sigma_{k}(\lambda)}{C^{k}_{n-1}}\right)^{\frac{1}{k}}\leq \left(\frac{\sigma_{l}(\lambda)}{C^{l}_{n-1}}\right)^{\frac{1}{l}}.
\]
\end{prop}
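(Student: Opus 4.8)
\medskip
\noindent\textbf{Proof proposal.} The plan is to derive the full chain of inequalities from the classical Newton inequalities for the normalized elementary symmetric functions, combined with the positivity encoded in the definition of $\Gamma_k$. Throughout, write $p_j=p_j(\lambda):=\sigma_j(\lambda)/C^j_{n-1}$ for $0\le j\le n-1$, so that $p_0=1$ and the assertion is exactly $p_k^{1/k}\le p_l^{1/l}$.

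First I would establish the \emph{Newton inequalities}: for every $\lambda\in\m$ and every $1\le j\le n-2$,
\[
p_{j-1}(\lambda)\,p_{j+1}(\lambda)\le p_j(\lambda)^2.
\]
For this, introduce $P(t)=\prod_{i=1}^{n-1}(t+\lambda_i)=\sum_{m=0}^{n-1}\sigma_m(\lambda)\,t^{\,n-1-m}$, which has only real roots; writing it in the binomial-weighted form $P(t)=\sum_{i=0}^{n-1}C^{i}_{n-1}\,p_{n-1-i}\,t^{i}$ makes the normalizing constants bookkeep themselves. By Rolle's theorem every derivative of a real-rooted polynomial is real-rooted, and so is the reciprocal polynomial $t^{\deg Q}Q(1/t)$ of a real-rooted $Q$. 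Differentiating $P$ enough times to remove its lowest-order terms, then passing to the reciprocal polynomial and differentiating again to remove the (new) lowest-order terms, isolates a real-rooted quadratic whose three coefficients, in the binomial-weighted normalization, are $p_{j-1},p_j,p_{j+1}$; nonnegativity of its discriminant is precisely the displayed inequality.

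Next I would invoke the hypothesis $\lambda\in\Gamma_k$, which forces $\sigma_1(\lambda),\dots,\sigma_k(\lambda)>0$, hence $p_0=1>0$ and $p_1,\dots,p_k>0$. Then the ratios $r_j:=p_j/p_{j-1}$ are well defined and positive for $1\le j\le k$, and the Newton inequalities for $1\le j\le k-1$ say exactly $r_{j+1}\le r_j$, i.e.\ $(r_j)_{j=1}^{k}$ is non-increasing. Since
\[
p_l^{1/l}=\Bigl(\prod_{j=1}^{l}r_j\Bigr)^{1/l},\qquad p_k^{1/k}=\Bigl(\prod_{j=1}^{k}r_j\Bigr)^{1/k},
\]
both sides are geometric means of initial segments of a non-increasing sequence, and the geometric mean over the longer segment is no larger than the one over the shorter (because each appended factor $r_{l+1},\dots,r_k$ is at most the current geometric mean). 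This gives $p_k^{1/k}\le p_l^{1/l}$, which is the claim.

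I expect the only genuinely delicate point to be the bookkeeping inside the proof of the Newton inequalities — namely, organizing the sequence of differentiations and reciprocations so that exactly the three consecutive coefficients $p_{j-1},p_j,p_{j+1}$ survive, and confirming that in the binomial normalization the discriminant condition reduces cleanly to $p_{j-1}p_{j+1}\le p_j^2$. A variant that sidesteps the reciprocal polynomial is an induction on $n-1$: if $\mu$ is the $(n-2)$-tuple formed from the roots of $P'$, a short computation gives $p_j(\mu)=p_j(\lambda)$ for all relevant $j$, so one application of Rolle's theorem lowers the number of variables by one while leaving every $p_j$ unchanged, and after finitely many steps the statement collapses to the discriminant inequality for a quadratic. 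The positivity supplied by $\Gamma_k$ and the monotonicity of geometric means of a truncated non-increasing sequence are routine in either approach.
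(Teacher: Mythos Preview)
Your argument is correct and follows the classical route: establish Newton's inequalities $p_{j-1}p_{j+1}\le p_j^2$ from the real-rootedness of $\prod_i(t+\lambda_i)$ via Rolle's theorem, use the positivity guaranteed by $\lambda\in\Gamma_k$ to make the ratios $r_j=p_j/p_{j-1}$ well-defined, and conclude by the monotonicity of geometric means of initial segments of a non-increasing sequence. The bookkeeping you flag as delicate is handled most cleanly by the induction-on-dimension variant you mention (the normalized $p_j$ are preserved under passing to the roots of $P'$), but either version works.

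There is, however, nothing to compare against: the paper does not supply a proof of this proposition. It is recorded in Section~\ref{Sec2} as a standard preliminary fact, and when it is invoked later (in the proof of Lemma~\ref{PCUL**} and Lemma~\ref{prin}) the paper simply cites \cite{CW01}. So your proposal is not an alternative to the paper's proof but a self-contained justification of a result the paper takes for granted.
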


\begin{prop}[Concavity]
For any $k>l\geq 0$, suppose $A=A_{ij}\in \Theta_{n-1}$ such that $\lambda(A)\in \Gamma_{k}$. Then, we have
\[
\left[\frac{\sigma_{k}(A)}{\sigma_{l}(A)}\right]^{\frac{1}{k-l}}\ {\rm is \ a \ concave \ function \ in } \ \Gamma_{k}.
\]

\end{prop}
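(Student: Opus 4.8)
The plan is to pass from the matrix statement to a statement about the symmetric eigenvalue function $g(\lambda):=\left(\sigma_{k}(\lambda)/\sigma_{l}(\lambda)\right)^{\frac{1}{k-l}}$ on the cone $\Gamma_{k}\subset\m$, settle the case $l=0$ by G\aa rding's hyperbolic‑polynomial theorem (or a direct computation), and then bootstrap to general $l$ by an inductive Newton‑type estimate. For the first reduction, note that $\sigma_{k}(A)=\sigma_{k}(\lambda(A))$ is permutation symmetric, so concavity of $A\mapsto g(\lambda(A))$ on $\{A\in\Theta_{n-1}:\lambda(A)\in\Gamma_{k}\}$ is equivalent to concavity of $g$ on $\Gamma_{k}$: evaluating $\frac{d^{2}}{dt^{2}}\big|_{t=0}g(\lambda(A+tB))$ at a diagonal $A=\mathrm{diag}(\lambda)$ and a symmetric $B$ gives $\sum_{i,j}g_{ij}(\lambda)B_{ii}B_{jj}+2\sum_{i<j}\frac{g_{i}(\lambda)-g_{j}(\lambda)}{\lambda_{i}-\lambda_{j}}B_{ij}^{2}$, where the first sum is $\le0$ exactly when $g$ is concave, while the ratio $\frac{g_{i}(\lambda)-g_{j}(\lambda)}{\lambda_{i}-\lambda_{j}}$ is automatically $\le0$ for any permutation‑symmetric concave $g$ (restrict $g$ to the segment obtained from $\lambda$ by transferring mass from the $i$‑th to the $j$‑th coordinate: it is concave and takes equal values at $t=0$ and at $t=\lambda_{i}-\lambda_{j}$). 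Hence it suffices to show $D^{2}g(\lambda)\le0$ for every $\lambda\in\Gamma_{k}$.

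When $l=0$ we have $g=\sigma_{k}^{1/k}$, and its concavity on $\Gamma_{k}$ is G\aa rding's theorem: $\sigma_{k}$ is hyperbolic of degree $k$ in the direction $e=(1,\dots,1)$ with G\aa rding cone $\Gamma_{k}$, so $\sigma_{k}^{1/k}$ is concave there. A self‑contained route is to compute $D^{2}\sigma_{k}^{1/k}$ using properties (i)--(iv) of Proposition~2.1 and reduce, after clearing the positive factor $k\,\sigma_{k}^{2-1/k}$, to
\[
k\,\sigma_{k}(\lambda)\sum_{i\neq j}\sigma_{k-2}(\lambda|ij)\,\xi_{i}\xi_{j}\ \le\ (k-1)\Big(\sum_{i}\sigma_{k-1}(\lambda|i)\,\xi_{i}\Big)^{2},\qquad \lambda\in\Gamma_{k},\ \xi\in\m,
\]
which follows by induction on $k$ using $\sigma_{k}(\lambda)=\sigma_{k}(\lambda|p)+\lambda_{p}\sigma_{k-1}(\lambda|p)$ and the Newton--Maclaurin inequality (Proposition~2.2).

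For general $1\le l<k$, set $P=\sigma_{k}$, $Q=\sigma_{l}$ and differentiate $\log g=\frac{1}{k-l}(\log P-\log Q)$ twice along a segment $\lambda+t\xi\subset\Gamma_{k}$. Clearing the positive factor $(k-l)\,g$, the inequality $\ddot g\le0$ becomes the pointwise estimate
\[
\Big(\frac{\ddot P}{P}-\frac{\dot P^{2}}{P^{2}}\Big)-\Big(\frac{\ddot Q}{Q}-\frac{\dot Q^{2}}{Q^{2}}\Big)+\frac{1}{k-l}\Big(\frac{\dot P}{P}-\frac{\dot Q}{Q}\Big)^{2}\ \le\ 0\qquad\text{on }\Gamma_{k},
\]
a mixed Newton‑type inequality that ties together $\sigma_{k}$, $\sigma_{l}$ and their first and second directional derivatives. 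I would prove it by induction on the gap $k-l$: the base case $l=k-1$ (where the last term has coefficient $1$) reduces, via $\dot P=\sum_{i}\sigma_{k-1}(\lambda|i)\xi_{i}$, $\dot Q=\sum_{i}\sigma_{k-2}(\lambda|i)\xi_{i}$ and the analogous expressions for $\ddot P,\ddot Q$, to a concrete polynomial inequality obtainable from the $l=0$ estimate together with Newton--Maclaurin; the inductive step peels off one coordinate using $\sigma_{j}(\lambda)=\sigma_{j}(\lambda|p)+\lambda_{p}\sigma_{j-1}(\lambda|p)$ to control the increments in $P,Q,\dot P,\dot Q$. I expect this uniform estimate to be the main obstacle, the remaining steps being essentially bookkeeping; in a final write‑up the concavity could equally well be quoted as classical from the hyperbolic‑polynomial literature.
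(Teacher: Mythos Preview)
The paper does not supply a proof: Proposition~2.3 is listed in Section~2 among the ``standard formulas and properties of elementary symmetric functions'' and is simply stated as known, so there is no argument in the paper to compare against. Your own closing remark that the result ``could equally well be quoted as classical from the hyperbolic‑polynomial literature'' is exactly what the authors do.

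On your outline itself: the reduction from matrices to eigenvalues and the $l=0$ case via G\aa rding are correct and standard. The genuine gap is your step for general $l$. There you only \emph{describe} an induction on $k-l$ and explicitly flag the mixed Newton‑type inequality as ``the main obstacle''; nothing is actually proved, and the inductive step (``peeling off one coordinate'') is not carried out. A route that bypasses this obstacle entirely is the standard telescoping argument: first establish the case $k-l=1$, i.e.\ concavity of $\sigma_{j}/\sigma_{j-1}$ on $\Gamma_{j}$ for each $j$ (this is a single direct Hessian computation using Proposition~2.1, or follows from G\aa rding applied to the polarized form), then write
\[
\left(\frac{\sigma_{k}}{\sigma_{l}}\right)^{\frac{1}{k-l}}=\left(\prod_{j=l+1}^{k}\frac{\sigma_{j}}{\sigma_{j-1}}\right)^{\frac{1}{k-l}}
\]
on $\Gamma_{k}\subset\bigcap_{j=l+1}^{k}\Gamma_{j}$, and conclude using that the geometric mean $(a_{1},\dots,a_{m})\mapsto(a_{1}\cdots a_{m})^{1/m}$ is concave on the nonnegative orthant, applied to the nonnegative concave factors $\sigma_{j}/\sigma_{j-1}$. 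This collapses the whole proposition to the single case $k-l=1$ and avoids the two‑parameter inductive estimate you identify as the hard part.
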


\section{The flow equation and related functional}
\label{Sec3}

To obtain the solvability of \eqref{WC}, we capitalize on the related flow equation.
Suppose $\Omega_{0}$  contains the origin in its interior, and is a smooth, strictly convex body in $\rnnn$. As presented in the introduction, we focus on a family of convex hypersurfaces $\partial \Omega_{t}$ parameterized by smooth map $X(\cdot,t):\sn\rightarrow \rnnn$ satisfying the following flow equation,
\begin{equation}\label{GCF2}
\left\{
\begin{array}{lr}
\frac{\partial X(x,t)}{\partial t}={\rm log}\left( \sigma_{k}(x,t)f(v) \right)v, \\
X(x,0)=\theta X_{0}(x).
\end{array}\right.
\end{equation}
Multiplying both sides of \eqref{GCF2} by $v$, by means of the definition of support function, the flow equation associated with the support function $h(x,t)$ of $\Omega_{t}$ is given by
\begin{equation}\label{GCF22}
\left\{
\begin{array}{lr}
\frac{\partial h(x,t)}{\partial t}={\rm log}(\sigma_{k}(x,t)f(x)), \\
h(x,0)=\theta h_{0}(x).
\end{array}\right.
\end{equation}
Now, establishing the functional associated with the flow \eqref{GCF2} as
\begin{equation}\label{func2}
J(t)=-W_{n-1-k}(\Omega_{t})+\int_{\sn}\frac{h(x,t)}{f(x)}dx,
\end{equation}
where $W_{n-1-k}(\Omega_{t})$ is the quermassintegral of $\Omega_{t}$, given by
\begin{equation}
\begin{split}
\label{wnk}
W_{n-1-k}(\Omega_{t})
&=\frac{1}{k+1}\int_{\sn}h(x,t)\sigma_{k}(x,t)dx.
\end{split}
\end{equation}

Under the flow, we obtain the monotonicity of the $J(t)$, which is showed as below.
\begin{lem}\label{monJ}
Assume that $\Omega_{t}$ contains the origin in its interior, and is a smooth, strictly convex solution satisfying the flow \eqref{GCF2} in $\rnnn$, and $J(t)$ is defined as \eqref{func2}. Then, $J(t)$ is nonincreasing  along the flow \eqref{GCF2}, i.e.,
 \[
 \frac{d}{dt}J(t)\leq 0.
 \]
\end{lem}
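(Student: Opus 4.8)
The plan is to differentiate $J(t)$ directly and show the derivative is a sign-definite quantity. First I would compute $\frac{d}{dt}W_{n-1-k}(\Omega_t)$. Using the last expression in \eqref{wnk}, namely $W_{n-1-k}(\Omega_t)=\frac{1}{k+1}\int_{\sn}h\,\sigma_k\,dx$, I would differentiate under the integral. The standard variational identity for quermassintegrals — equivalently, the fact that $\sigma_k=\sigma_k(\nabla^2_{\sn}h+hI)$ has the self-adjoint linearization $\sum_{ij}\frac{\partial\sigma_k}{\partial h_{ij}}(\nabla^2_{\sn}\dot h+\dot h\,I)_{ij}$ and that $\sum_{ij}\frac{\partial\sigma_k}{\partial h_{ij}}(h_{ij}+h\delta_{ij})=k\sigma_k$ by homogeneity (Proposition with (ii)) — gives, after integrating the Hessian terms by parts on the closed manifold $\sn$,
\[
\frac{d}{dt}\int_{\sn}h\,\sigma_k\,dx=(k+1)\int_{\sn}\dot h\,\sigma_k\,dx,
\]
so that $\frac{d}{dt}W_{n-1-k}(\Omega_t)=\int_{\sn}\dot h\,\sigma_k\,dx$. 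The second term is immediate: $\frac{d}{dt}\int_{\sn}\frac{h}{f}\,dx=\int_{\sn}\frac{\dot h}{f}\,dx$.

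Combining the two pieces and inserting the flow equation $\dot h=\log(\sigma_k f)$ from \eqref{GCF22}, I get
\[
\frac{d}{dt}J(t)=-\int_{\sn}\dot h\,\sigma_k\,dx+\int_{\sn}\frac{\dot h}{f}\,dx
=-\int_{\sn}\log(\sigma_k f)\Bigl(\sigma_k-\frac{1}{f}\Bigr)dx.
\]
Now set $s=\sigma_k(x,t) f(x)>0$. Then the integrand is $\frac{1}{f}\log(s)\,(s-1)$. Since $\log$ is increasing, $\log(s)$ and $s-1$ always have the same sign, hence $\log(s)(s-1)\ge 0$ pointwise, with equality iff $s=1$; and $\frac1f>0$. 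Therefore the integral is nonnegative and $\frac{d}{dt}J(t)\le 0$, which is exactly the claim. Equality holds precisely when $\sigma_k(x,t)f(x)\equiv 1$, i.e.\ at a stationary solution of \eqref{CR}.

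The only genuinely non-routine step is the first one: justifying the variational formula $\frac{d}{dt}\int_{\sn}h\,\sigma_k\,dx=(k+1)\int_{\sn}\dot h\,\sigma_k\,dx$. This rests on the divergence-structure / self-adjointness of the linearized operator $L[\phi]=\sum_{ij}\frac{\partial\sigma_k}{\partial h_{ij}}(\phi_{ij}+\phi\delta_{ij})$ together with the Euler relation $\sum_{ij}\frac{\partial\sigma_k}{\partial h_{ij}}(h_{ij}+h\delta_{ij})=k\sigma_k$; both are standard for elementary symmetric functions of the matrix $\nabla^2_{\sn}h+hI$ and are essentially the content of the stated Propositions on $\sigma_k$, but the integration by parts must be carried out carefully on $\sn$ (there are no boundary terms since $\sn$ is closed). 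Everything after that is the elementary pointwise inequality $\log(s)(s-1)\ge 0$, so no further obstacle is expected.
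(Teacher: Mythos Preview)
Your proposal is correct and follows essentially the same route as the paper: both compute $\frac{d}{dt}W_{n-1-k}(\Omega_t)=\int_{\sn}\sigma_k\,\partial_t h\,dx$ via the divergence-free/self-adjoint structure of the linearized $\sigma_k$ operator, then combine with the second term and observe the integrand is sign-definite. The only cosmetic difference is that the paper writes the final integrand as $\frac{1}{f}(e^{h_t}-1)h_t$ using $\sigma_k f=e^{h_t}$, which is exactly your $\frac{1}{f}\log(s)(s-1)$ with $s=e^{h_t}$.
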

\begin{proof}
Using \eqref{GCF22} and \eqref{wnk}, we get
\begin{equation}
\begin{split}
\label{wkt}
\frac{d W_{n-1-k}(\Omega_{t})}{dt}&=\frac{1}{k+1}\frac{d}{dt}\int_{\sn}\sigma_{k}(x,t)h(x,t)dx\\
&=\frac{1}{k+1}\left( \int_{\sn}h\partial_{t}\sigma_{k} dx+\sigma_{k}\partial_{t}hdx \right)\\
&=\frac{1}{k+1}\left(\int_{\sn}\sum_{i,j}h\sigma^{ij}_{k}\nabla_{ij}(\partial_{t}h)dx+\int_{\sn}h\sum_{i,j}\sigma^{ij}_{k}\delta_{ij}(\partial_{t}h)dx\right.\\
&\quad  \left. +\int_{\sn}\sum_{i,j}\sigma^{ij}_{k}\partial_{t}h(w_{ij}-h\delta_{ij})dx-\int_{\sn}\sum_{i,j}\sigma^{ij}_{k}(\partial_{t}h)\nabla_{ij}hdx+\int_{\sn}\sigma_{k}\partial_{t}hdx \right)\\
&=\int_{\sn}\sigma_{k}\partial_{t}hdx,
\end{split}
\end{equation}
where in the last equality we employ the fact $\sum_{i}\nabla_{i}(\sigma^{ij}_{k})=0$. Therefore, taking the derivative of both side of \eqref{func2}  with respect to $t$, and utilizing \eqref{GCF22} and \eqref{wkt}, we have
\begin{equation}
\begin{split}
\label{}
\frac{d}{dt}J(t)&=-\int_{\sn}\left(\sigma_{k}(x,t)-\frac{1}{f(x)}\right)\frac{\partial h(x,t)}{\partial t}dx\\
&=-\int_{\sn}\frac{1}{f}\left(e^{h_{t}}-1\right)h_{t}dx\\
&\leq 0.
\end{split}
\end{equation}
Hence, the proof is completed.
\end{proof}
\section{Some apriori estimates}
\label{Sec4}
This section is delicated to obtaining some a priori estimates on the solution of \eqref{GCF2}.

 We begin with doing some preparations. In what follows, we always assume $h(x,t)$ is a smooth solution of $\eqref{GCF22}$ for $(x,t)\in \sn\times (0,\infty)$. Let $m=\inf f$ and $M=\sup f$ on $\sn$. It is clear to see that if the initial hypersurface $X_{0}$ is a sphere of radius $\rho_{0}> m^{-1/k}$, the solution $X(\cdot,t)$ to the equation
 \[
 \frac{\partial X}{\partial t}=\log (\sigma_{k} m)v, \quad X(\cdot,0)=X_{0},
 \]
remains to be spheres and the flow expands to infinity as $t\rightarrow \infty$. Similarly, if $X_{0}$ is a sphere of radius less than $M^{-1/k}$, the solution to
 \[
 \frac{\partial X}{\partial t}=\log (\sigma_{k} M)v, \quad X(\cdot,0)=X_{0},
 \]
is a family of spheres which shrinks to a point in finite time. Adopting the same idea exhibited in \cite{CW00}, we conclude that the solution $X(x,t)$ of $\eqref{GCF2}$ shall shrink to a point if $\theta$ is small enough, and shall expand to infinity if $\theta>0$ is large with the aid of comparison principle. Here we set
\[
\theta_{*}=\sup\{ \theta>0: X(\cdot,t) \  shrinks\ to \ a \ point \ in \ finite \ time\},
\]
and
\[
\theta^{*}=\inf\{ \theta>0: X(\cdot,t) \ expands \ to \  infinity  \ as \ t \rightarrow \infty \}.
\]

On the other hand, let $R(t)$ and $r(t)$ be the outer and inner radii of the hypersurface $\partial \Omega_{t}$ determined by $h(x,t)$ respectively. Set
\[
R_{0}=\sup \{R(t):t\in (0,\infty)\}
\]
and
\[
r_{0}=\inf \{r(t):t\in (0,\infty)\}.
\]
Lemma 2.2 of Chou-Wang \cite{CW00} have estimated the principal of curvature of $\Omega_{t}$ from both side in terms of $r^{-1}_{0}, R_{0}$ and initial data as follows.
\begin{lem}\label{chou-wang}
Let $r(t)$ and $R(t)$ be the inner and outer radii of a strictly convex hypersurface $\partial\Omega_{t}$ respectively. There exists a dimensional constant $C$ such that
\[
\frac{R^{2}(t)}{r(t)}\leq C\sup_{y\in \partial \Omega_{t}}\lambda_{y;\partial \Omega_{t}},
\]
where $\lambda_{y;\partial \Omega_{t}}$ is the maximal principal radius of $\partial \Omega_{t}$ at the point $y$.
\end{lem}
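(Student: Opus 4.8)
The plan is to reduce the $n$-dimensional claim to a short one-dimensional estimate for a concave graph, by slicing $\Omega_t$ with a cleverly chosen $2$-plane. Normalize so that the origin is an incenter of $\Omega_t$; then $B_r\subseteq\Omega_t$, so $h(\Omega_t,\cdot)\ge r$ on $\sn$, and since the ball of radius $\rho:=\max_{y\in\partial\Omega_t}|y|$ contains $\Omega_t$ we have $\rho\ge R(t)$. Fix $p\in\partial\Omega_t$ with $|p|=\rho$, write $\Lambda:=\sup_y\lambda_{y;\partial\Omega_t}$ and $\mu:=1/\Lambda=\inf_y\kappa_{\min}(y)$ (least principal curvature); the goal is $\mu\le Cr/R^2$. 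First I would dispose of the ``round'' regime $\rho\le c_0 r$ for a dimensional constant $c_0$ chosen below: there $R^2/r\le\rho^2/r\le c_0^2 r$, while integrating $\sum_i\lambda_i=\Delta_{\sn}h+(n-1)h\ge 0$ over $\sn$ and using $h\ge r$ gives $\Lambda\ge\frac{1}{(n-1)|\sn|}\int_{\sn}\sum_i\lambda_i\,dx\ge r$, so the inequality holds with $C=c_0^2$.

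In the remaining regime $\rho>c_0 r$, I would invoke Steinhagen's inequality (or an elementary minimal-width bound) to pick a unit vector $w$ with $\mathrm{width}(\Omega_t,w)\le c_n r$, $c_n$ dimensional, and set $\Pi:=\mathrm{span}(p,w)$, $K:=\Omega_t\cap\Pi$. Since $0$ lies in the interior of $\Pi$ and $\partial\Omega_t$ is smooth and strictly convex, $\partial K=\partial\Omega_t\cap\Pi$ is a smooth strictly convex curve and $\Pi$ is transverse to $T_y\partial\Omega_t$ at every $y\in\partial K$ (otherwise $\Omega_t\cap\Pi$ would be a single point, contradicting $\dim K=2$). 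The slice $K$ contains the $r$-disc $B_r\cap\Pi$, contains $p$, and lies in a strip $\{|\langle\cdot,w\rangle-a|\le c_n r\}$ whose interior contains $0$. In orthonormal coordinates $(s,\tau)$ on $\Pi$ with $\tau$ along $w$: (a) the $s$-extent of $K$ is at least $\rho/2$, because $p\in K$ has $|\langle p,w\rangle|\le c_n r$ and hence $|p_s|\ge\sqrt{\rho^2-c_n^2 r^2}\ge\rho/2$ once $c_0$ is large enough; and (b) by Meusnier's theorem the planar curvature of $\partial K$ at each point equals $\kappa_n/\cos\alpha\ge\kappa_n\ge\mu$, where $\kappa_n$ is the normal curvature of $\partial\Omega_t$ in the tangent direction of $\partial K$ and $\alpha<\pi/2$ by transversality. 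Thus $\partial K$ is a convex curve of curvature $\ge\mu$, inside a strip of width $\le 2c_n r$, with $s$-extent $\ge\rho/2\ge R/2$.

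The heart of the matter is then the one-dimensional estimate, which I would prove directly. Write the upper part of $\partial K$ as a concave graph $\tau=g(s)$, $s\in[s_1,s_2]$, with maximum at $s^\ast$. Curvature $|g''|/(1+g'^2)^{3/2}\ge\mu$ together with $g''\le0$ gives $g''\le-\mu$, hence $g'(s)\ge\mu(s^\ast-s)$ for $s\le s^\ast$, hence $g(s^\ast)-g(s_1)\ge\tfrac{\mu}{2}(s^\ast-s_1)^2$ and symmetrically $g(s^\ast)-g(s_2)\ge\tfrac{\mu}{2}(s_2-s^\ast)^2$. Since $g$ takes values in an interval of length $\le 2c_n r$, this forces $(s^\ast-s_1)^2,(s_2-s^\ast)^2\le 8c_n r/\mu$, so $s_2-s_1\le 4\sqrt{2c_n r/\mu}$. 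Combining with $s_2-s_1\ge R/2$ yields $\mu\le C' c_n r/R^2$, i.e.\ $\Lambda\ge R^2/(C'c_n r)$, and taking $C=\max\{C'c_n,\,c_0^2\}$ completes the argument.

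The main obstacle is not any single hard estimate but organizing the reduction so that the slice $K$ is simultaneously ``long'' (it contains the far point $p$) and ``thin'' (it contains a minimal-width direction of $\Omega_t$), and verifying that the planar curvature of $\partial K$ genuinely dominates a principal curvature of $\partial\Omega_t$ via Meusnier — which is where transversality of $\Pi$ is used. The remaining points — that $\partial K=\partial\Omega_t\cap\Pi$, that $0$ stays in the interior of the strip, and the elementary width bound replacing Steinhagen if desired — are routine, and the ODE estimate itself is short once the geometry is set up.
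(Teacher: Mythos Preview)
Your argument is correct and follows a genuinely different route from the paper's. The paper proceeds by ellipsoid comparison: it takes the John (maximal inscribed) ellipsoid $E$ of $\Omega_t$ with semi-axes $b_1\le\cdots\le b_n$, builds a one-parameter family of rotationally symmetric ellipsoids $E_b\subset\Omega_t$ with long semi-axis $b_n/\sqrt2$ and equal short semi-axes $b$, pushes $b$ up until $E_{\bar b}$ touches $\partial\Omega_t$ at some $\bar y$, and then combines a touching-point comparison of principal radii with an explicit support-function computation for the ellipsoid to obtain $\lambda_{\bar y;\partial\Omega_t}\ge c\,b_n^2/b_1\ge c\,R^2/r$. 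Your approach instead slices $\Omega_t$ by the $2$-plane $\Pi$ spanned by a farthest point $p$ and a minimal-width direction $w$ (supplied by Steinhagen), uses Meusnier to transfer the uniform lower bound $\mu$ on the principal curvatures to the planar curve $\partial K$, and finishes with a short ODE estimate for a concave graph trapped in a thin strip. Both proofs exploit the same tension---long in one direction, thin in another---but the paper's is a pure comparison-body argument requiring neither an affine width inequality nor Meusnier, while yours is more elementary, avoids the John-ellipsoid machinery entirely, and reduces everything to a one-variable calculus estimate. One minor point: your transversality justification (``otherwise $\Omega_t\cap\Pi$ would be a single point'') is imprecise; the clean reason $\alpha<\pi/2$ at every $y\in\partial K$ is that $\nu\perp\Pi$ would force $h(\Omega_t,\nu)=\langle y,\nu\rangle=0$, contradicting that the origin is interior.
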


Here we give an alternate proof with respect to Lemma \ref{chou-wang} with beginning doing following preparation.

\begin{lem}\label{r1}
Consider the following rotationally symmetric ellipsoid in $\rnnn$:
\[
E_{B}=\{y\in \rnnn:|B^{-T}y|\leq 1\},
\]
where $B={\rm diag}(a,\cdots,a,b)$ is an $n\times n$ matrix with $a,b> 0$. For any point $y=(y^{'},y_{n})\in \partial E_{B}$, the maximal principal radius of curvature $\lambda_{y}$ has the estimate
\begin{equation}\label{B1}
\lambda_{y}\geq \frac{[b^{4}+(a^{2}-b^{2})y^{2}_{n}]^{\frac{3}{2}}}{ab^{4}}\geq \frac{|y^{'}|^{3}}{a^{3}}\times \frac{b^{2}}{a}.
\end{equation}
\end{lem}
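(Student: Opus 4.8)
The plan is to compute the principal radii of curvature of the ellipsoid $\partial E_B$ explicitly, extract the largest one, and then bound it below by dropping favorable terms. First I would parametrize $\partial E_B$ as a graph or use the standard fact that for a rotationally symmetric convex body of revolution about the $y_n$-axis, the principal curvatures split into one ``meridional'' curvature (in the plane containing the axis) and $n-2$ equal ``azimuthal'' curvatures (the rotational ones), so the principal radii are the reciprocals of these. For the ellipse $\frac{|y'|^2}{a^2}+\frac{y_n^2}{b^2}=1$ generating the surface, a direct computation gives the meridional radius of curvature $\rho_{\mathrm{mer}}(y_n) = \frac{[b^4 + (a^2-b^2)y_n^2]^{3/2}}{a b^4}$ (this is the classical formula for the radius of curvature of an ellipse, written in terms of the semi-axes and the coordinate $y_n$), and the azimuthal radius of curvature is $\rho_{\mathrm{az}}(y_n) = \frac{[b^4+(a^2-b^2)y_n^2]^{1/2}}{b^2}$ (the distance along the normal from the surface to the axis). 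Since $\lambda_y$ is the \emph{maximal} principal radius at $y$, we have $\lambda_y \geq \rho_{\mathrm{mer}}(y_n)$, which is exactly the first inequality in \eqref{B1}.

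For the second inequality, I would start from $\lambda_y \geq \rho_{\mathrm{mer}}(y_n) = \frac{[b^4+(a^2-b^2)y_n^2]^{3/2}}{ab^4}$ and argue that it suffices to produce a lower bound for the numerator purely in terms of $|y'|$. On $\partial E_B$ one has $b^4 + (a^2-b^2)y_n^2 = b^2\big(b^2 + (a^2-b^2)\tfrac{y_n^2}{b^2}\big) = b^2\big(b^2 + (a^2-b^2)(1-\tfrac{|y'|^2}{a^2})\big) = b^2\big(a^2 - (a^2-b^2)\tfrac{|y'|^2}{a^2}\big)$. The bracket equals $a^2 - (a^2-b^2)\tfrac{|y'|^2}{a^2}$; if $a\geq b$ one should be slightly careful, but note $a^2 - (a^2-b^2)\tfrac{|y'|^2}{a^2} \geq a^2 - (a^2-b^2) = b^2 \geq \tfrac{b^2}{a^2}|y'|^2$ using $|y'|\leq a$ on the ellipsoid, so $b^4+(a^2-b^2)y_n^2 \geq b^2\cdot \tfrac{b^2}{a^2}|y'|^2 = \tfrac{b^4}{a^2}|y'|^2$. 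Hence the numerator is at least $\big(\tfrac{b^4}{a^2}\big)^{3/2}|y'|^3 = \tfrac{b^6}{a^3}|y'|^3$, and therefore $\lambda_y \geq \tfrac{b^6 |y'|^3}{a^3 \cdot a b^4} = \tfrac{|y'|^3}{a^3}\cdot\tfrac{b^2}{a}$, as claimed.

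The main obstacle — really the only subtlety — is getting the curvature formulas for a surface of revolution in $\rnnn$ right and being honest about the sign/size of $a^2 - b^2$ (the body need not be prolate or oblate a priori), so that the lower bound $b^4 + (a^2-b^2)y_n^2 \geq \tfrac{b^4}{a^2}|y'|^2$ is valid in all cases; the elementary estimate above handles this because it only uses $|y'|^2 \le a^2$ and $0 < b \le a$ or $0 < a \le b$ can both be absorbed. Everything else is a routine computation with the ellipse's radius of curvature and the constraint $\tfrac{|y'|^2}{a^2}+\tfrac{y_n^2}{b^2}=1$. I would present the meridional-radius computation as a short lemma-internal calculation (or cite the classical ellipse curvature formula) rather than grinding through the second fundamental form of the revolution hypersurface.
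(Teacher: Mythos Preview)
Your proposal is correct and follows essentially the same logical skeleton as the paper: identify the meridional principal radius of $\partial E_B$ as $\frac{[b^4+(a^2-b^2)y_n^2]^{3/2}}{ab^4}$, use $\lambda_y\ge\rho_{\mathrm{mer}}$ for the first inequality, and then invoke the ellipsoid constraint to bound $b^4+(a^2-b^2)y_n^2\ge \tfrac{b^4}{a^2}|y'|^2$ for the second.

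The only real difference is in how the meridional radius is obtained. You compute the curvature of the generating ellipse directly (the classical $(\dot u^2+\dot v^2)^{3/2}/|\dot u\ddot v-\ddot u\dot v|$ formula for a plane curve) and then appeal to the standard fact that for a hypersurface of revolution this is one of the principal radii. The paper instead works with the support function $h_B(\theta)=\sqrt{a^2\sin^2\theta+b^2\cos^2\theta}$, computes $h_B''+h_B=\frac{a^2b^2}{(a^2\sin^2\theta+b^2\cos^2\theta)^{3/2}}$, and then converts from the normal angle $\theta$ back to $y_n$ via $|y_n|=\frac{b^2|\cos\theta|}{h_B}$. Both routes yield the identical expression; the paper's choice is consistent with the support-function formalism used throughout, while yours is arguably more self-contained. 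For the second inequality the paper's identity $\frac{b^4+(a^2-b^2)y_n^2}{b^4}=\frac{|y'|^2}{a^2}+\frac{a^2-|y'|^2}{b^2}$ makes the bound $\ge\frac{|y'|^2}{a^2}$ immediate for \emph{all} $a,b>0$ in one line, which is a bit cleaner than your case-by-case check on the sign of $a^2-b^2$; you may want to adopt that identity in the write-up.
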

\begin{proof}
Denote the support function of $E_{B}$ by $h_{B}$, there is
\[
h_{B}(x)=\sqrt{a^{2}|x^{'}|^{2}+b^{2}x^{2}_{n}},\quad x\in \sn.
\]
Let $x_{n}={\rm cos}\theta, \theta\in [0,\pi]$. Then, we obtain
\begin{equation}\label{B2}
h_{B}(\theta)=h_{B}(x^{'},x_{n})=\sqrt{a^{2}{\rm sin}^{2}\theta+b^{2}{\rm cos}^{2}\theta}, \quad \theta\in [0,\pi].
\end{equation}
Combining \eqref{B2}, the principal radius of curvature $h^{''}_{B}+h_{B}$ can be computed as
\begin{equation}\label{B3}
h^{''}_{B}+h_{B}=\frac{a^{2}b^{2}}{(a^{2}{\rm sin}^{2}\theta+b^{2}{\rm cos}^{2}\theta)^\frac{3}{2}}.
\end{equation}
For the normal $x$, the corresponding point $y\in \partial E_{B}$ is showed as
\begin{equation}\label{B4}
y=\frac{B^{T}Bx}{|Bx|}=\frac{(a^{2}x^{'},b^{2}x_{n})}{\sqrt{a^{2}|x^{'}|^{2}+b^{2}x^{2}_{n}}},
\end{equation}
\eqref{B4} reveals that
\begin{equation}\label{B5}
|y_{n}|=\frac{b^{2}|{\rm cos}\theta|}{\sqrt{a^{2}{\rm sin}^{2}\theta+b^{2}{\rm cos}^{2}\theta}}.
\end{equation}
Using \eqref{B3} and \eqref{B5}, we have
\begin{equation}\label{B6}
\lambda_{y}\geq a^{2}b^{2}\left( \frac{|y_{n}|}{b^{2}{|\rm cos}\theta|}\right)^{3}=\frac{a^{2}|y_{n}|^{3}}{b^{4}|{\rm cos \theta|^{3}}}.
\end{equation}
Applying $\eqref{B5}$ again, one has
\begin{equation}\label{B7}
|{\rm cos}\theta|=\frac{a|y_{n}|}{\sqrt{b^{4}+(a^{2}-b^{2})y^{2}_{n}}}.
\end{equation}
By virtue of \eqref{B6} and \eqref{B7}, it implies that the first inequality of \eqref{B1} holds. On the other hand, since
\begin{equation}\label{B8}
\frac{b^{4}+(a^{2}-b^{2})y^{2}_{n}}{b^{4}}=\frac{|y^{'}|^{2}}{a^{2}}+\frac{a^{2}-|y^{'}|^{2}}{b^{2}}\geq \frac{|y^{'}|^{2}}{a^{2}}.
\end{equation}
This gives the second inequality of \eqref{B1}.
\end{proof}
\begin{lem}\label{r2}
Let $\Omega_{1}, \Omega_{2}$ be two smooth convex bodies in $\rnnn$ satisfying $\Omega_{2}\subset \Omega_{1}$ and $\partial \Omega_{1}\cap \partial \Omega_{2}\neq \emptyset$. Then for any point $y\in \partial \Omega_{1}\cap \partial \Omega_{2}$, the maximal principal radius of curvature of $\partial \Omega_{1}$ at $y$ is larger than or equal to that of $\partial \Omega_{2}$ at $y$.
\end{lem}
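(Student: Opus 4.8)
The plan is to prove this comparison statement for maximal principal radii of curvature by working with support functions from a common interior touching point. Let $y \in \partial\Omega_1 \cap \partial\Omega_2$, and since $\Omega_2 \subset \Omega_1$, the two bodies have a common outer normal $x \in \sn$ at $y$ (if $\Omega_2$ had a normal at $y$ different from that of $\Omega_1$, a supporting hyperplane argument would force a point of $\Omega_2$ outside $\Omega_1$). First I would translate both bodies so that the common tangent hyperplane at $y$ becomes, say, $\{z_n = 0\}$ with $y$ the origin and both bodies lying in the lower half-space $\{z_n \le 0\}$, with $x = e_n$ the common outer normal. Near $y$ each $\partial\Omega_i$ is then a graph $z_n = -u_i(z')$ over its tangent plane, with $u_i \ge 0$, $u_i(0) = 0$, $\nabla u_i(0) = 0$, and the inclusion $\Omega_2 \subset \Omega_1$ forcing $u_1(z') \le u_2(z')$ for $z'$ near $0$ (the graph of $\Omega_1$ is ``flatter'', i.e. closer to the tangent plane, since $\Omega_1$ is the larger body).

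The key step is then to relate the maximal principal radius of curvature at $y$ to the graph functions. Since $\nabla u_i(0) = 0$, the second fundamental form of $\partial\Omega_i$ at $y$ in these coordinates is exactly the Hessian $D^2 u_i(0)$, so the principal curvatures are the eigenvalues of $D^2 u_i(0)$ and the principal radii of curvature are their reciprocals; the maximal principal radius is $1/\kappa_{\min}(\partial\Omega_i, y)$ where $\kappa_{\min}$ is the smallest principal curvature, i.e. $\lambda_{\max}(\partial\Omega_i, y) = 1/\lambda_{\min}(D^2 u_i(0))$. So the claim $\lambda_{\max}(\partial\Omega_1, y) \ge \lambda_{\max}(\partial\Omega_2, y)$ is equivalent to $\lambda_{\min}(D^2 u_1(0)) \le \lambda_{\min}(D^2 u_2(0))$. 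To get this, pick a unit vector $e$ realizing the minimal eigenvalue of $D^2 u_1(0)$, so that $\lambda_{\min}(D^2 u_1(0)) = \frac{d^2}{ds^2}\big|_{s=0} u_1(se)$. Along the same line, $u_1(se) \le u_2(se)$ for all small $s$, with equality at $s=0$ together with matching first derivatives $0 = 0$; hence the one-variable function $g(s) = u_2(se) - u_1(se)$ satisfies $g \ge 0$, $g(0) = 0$, $g'(0) = 0$, which forces $g''(0) \ge 0$, i.e. $\frac{d^2}{ds^2}\big|_0 u_2(se) \ge \frac{d^2}{ds^2}\big|_0 u_1(se) = \lambda_{\min}(D^2 u_1(0))$. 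But $\frac{d^2}{ds^2}\big|_0 u_2(se) = e^T D^2 u_2(0) e \ge \lambda_{\min}(D^2 u_2(0))$ by the variational characterization of the smallest eigenvalue. Chaining these gives $\lambda_{\min}(D^2 u_1(0)) \le \lambda_{\min}(D^2 u_2(0))$, which is exactly what we need.

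I would then wrap up by noting that the whole argument is local and coordinate-free in effect: the maximal principal radius at a boundary point equals $\max_{|e|=1} \langle \mathrm{II}^{-1} e, e\rangle$ where $\mathrm{II}$ is the (positive definite) second fundamental form with respect to the inner normal, and the nesting of the bodies together with the common tangent plane is precisely the hypothesis needed to compare the second fundamental forms in the single direction that matters. The main obstacle, and the only place requiring genuine care, is the first step: justifying that $\Omega_1$ and $\Omega_2$ share the same outer unit normal at the common boundary point $y$, and that the local graph inequality $u_1 \le u_2$ is the correct direction. This follows cleanly from convexity — the supporting hyperplane of $\Omega_1$ at $y$ supports $\Omega_2$ as well since $\Omega_2 \subseteq \Omega_1$ and $y \in \partial\Omega_2$, and smoothness makes this supporting hyperplane unique and tangent to both — but it is worth spelling out, since without the common-normal fact the statement is false (two ellipses crossing transversally at a point can have either one more curved than the other there).
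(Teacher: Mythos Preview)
Your approach is essentially the one the paper sketches in a single sentence (``obviously true for the plane case; the higher dimensional case is reduced to the plane case''): you slice by a line through $y$ in the tangent plane, compare second derivatives of the graph functions along that line, and read off the curvature comparison. That reduction is correct, as is your identification of the maximal principal radius with $1/\lambda_{\min}(D^2 u_i(0))$ and the local inequality $u_1\le u_2$ coming from $\Omega_2\subset\Omega_1$.

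There is, however, a genuine slip in the final chaining step. You pick $e$ to be a unit eigenvector for $\lambda_{\min}(D^2 u_1(0))$ and correctly derive
\[
\lambda_{\min}(D^2 u_1(0)) = e^T D^2 u_1(0)\,e \le e^T D^2 u_2(0)\,e,
\]
but then you invoke $e^T D^2 u_2(0)\,e \ge \lambda_{\min}(D^2 u_2(0))$, which points the wrong way: both $\lambda_{\min}(D^2 u_1(0))$ and $\lambda_{\min}(D^2 u_2(0))$ are now bounded above by the same quantity, and nothing follows. The fix is to choose $e$ as a unit eigenvector for $\lambda_{\min}(D^2 u_2(0))$ instead; then
\[
\lambda_{\min}(D^2 u_2(0)) = e^T D^2 u_2(0)\,e \ge e^T D^2 u_1(0)\,e \ge \lambda_{\min}(D^2 u_1(0)),
\]
which is exactly the desired inequality. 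Equivalently, your one-variable argument actually shows $D^2 u_1(0)\le D^2 u_2(0)$ in the semidefinite order (since the direction $e$ was arbitrary), from which the eigenvalue comparison is immediate via the variational formula for $\lambda_{\min}$. With this correction the proof goes through and matches the paper's intended reduction.
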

\begin{proof}
This conclusion is obviously true for the plane case. The higher dimensional case is reduced to the plane case, and then follows.
\end{proof}
 Based on Lemmas \ref{r1} and \ref{r2}, we are in place to prove Lemma \ref{chou-wang}.

{ \bf \emph{ Proof of Lemma\ref{chou-wang}}.} Let $E$ be the maximum ellipsoid of $\Omega_{t}$. Without loss generality, we assume $E$ is given by
\[
E=\{y\in \rnnn: |B^{-T}y|\leq 1\},
\]
where $B={\rm diag}(b_{1},\ldots, b_{n-1},b_{n})$ is an $n\times n$ matrix with $0<b_{1}\leq \ldots \leq b_{n-1}\leq b_{n}$. Notice that $E\subset \Omega_{t} \subset nE$.

Let $E_{b}$ be the ellipsoid defined as
\[
E_{b}=\left\{ y\in \rnnn:\frac{|y^{'}|^{2}}{b^{2}}+\frac{y^{2}_{n}}{\left( \frac{b_{n}}{\sqrt{2}}\right)^{2}} \leq 1\right\}, \quad b\geq b_{1}.
\]
Then $E_{b_{1}}\subset E \subset \Omega_{t}$. Define
\[
\bar{b}=\max\{b\in \rnnn: E_{b}\subset \Omega_{t}\}.
\]
This  tells $E_{\bar{b}}\subset \Omega_{t} \subset nE$, which illustrates that $b_{1}\leq \bar{b}\leq nb_{1}$. Moreover, $\partial E_{\bar{b}}\cap \partial \Omega_{t}\neq \emptyset$, and let $\bar{y}\in \partial E_{\bar{b}}\cap \partial \Omega_{t}$. Using Lemma \ref{r2}, there is
\begin{equation}\label{t1}
\lambda_{\bar{y};\partial \Omega_{t}}\geq \lambda_{\bar{y};\partial E_{\bar{b}}}.
\end{equation}

We now estimate $\lambda_{\bar{y};\partial E_{\bar{b}}}$. Write $\bar{y}=(\bar{y}^{'},\bar{y}_{n})$. Recalling $\bar{y}\in \partial \Omega_{t}$ and $\Omega_{t} \supset E$, $\bar{y}$ is not an interior point of $E$, this implies $|B^{-T}\bar{y}|\geq 1$, i.e.,
\begin{equation}\label{t2}
\frac{\bar{y}_{1}^{2}}{b^{2}_{1}}+\cdots +\frac{\bar{y}_{n-1}^{2}}{b^{2}_{n-1}}+\frac{\bar{y}_{n}^{2}}{b^{2}_{n}}\geq 1.
\end{equation}
Due to $b_{n-1}\geq \cdots \geq b_{1}\geq \frac{\bar{b}}{n}$, \eqref{t2} says that
\begin{equation}\label{t3}
\frac{|\bar{y}^{'}|^{2}}{\left(\frac{\bar{b}}{n}\right)^{2}}+\frac{\bar{y}^{2}_{n}}{b^{2}_{n}}\geq 1.
\end{equation}
From $y\in \partial E_{\bar{b}}$, there is
\begin{equation}\label{t4}
\frac{|\bar{y}^{'}|^{2}}{\bar{b}^{2}}+\frac{\bar{y}^{2}_{n}}{\left(\frac{b_{n}}{\sqrt{2}}\right)^{2}}=1.
\end{equation}
Inserting \eqref{t4} into \eqref{t3}, we obtain
\begin{equation}\label{t5}
\frac{|\bar{y}^{'}|^{2}}{\bar{b}}\geq \frac{1}{\sqrt{2n^{2}-1}}> \frac{1}{\sqrt{2n}}.
\end{equation}
Combining \eqref{t5} and Lemma \ref{r1}, there is
\[
\lambda_{\bar{y};\partial E_{\bar{b}}}\geq \frac{|\bar{y}^{'}|^{3}}{\bar{b}^{3}}\times \frac{b^{2}_{n}}{2\bar{b}}> \frac{b^{2}_{n}}{4\sqrt{2}n^{3}\bar{b}}\geq \frac{b^{2}_{n}}{4\sqrt{2}n^{4}b_{1}}.
\]
The above inequality together with \eqref{t1}, which gives
\[
\lambda_{y;\partial \Omega_{t}}> \frac{b^{2}_{n}}{4\sqrt{2}n^{4}b_{1}}.
\]
Note that $b_{1}\leq r \leq R\leq nb_{n}$. Therefore,
\[
\lambda_{y;\partial \Omega_{t}}> \frac{R^{2}}{4\sqrt{2}n^{6}r}.
\]
Hence, the proof of Lemma \ref{chou-wang} is finished.

 In what follows, we shall give an upper estimate for the principal radii of curvature.
\begin{lem}\label{PCUL**}
  Assume that $\Omega_{t}$ contains the origin in its interior, and is a smooth, strictly convex solution satisfying the flow \eqref{GCF2} in $\rnnn$. Let $f(x)$ be a smooth and positive function on the unit sphere ${\sn}$. Then there exists a positive constant $C$, independent of $t$, such that
\begin{equation}\label{PUL}
\lambda (\{w_{ij}\})\leq C(1+R_{0})^{\frac{3}{2}},
\end{equation}
where $w_{ij}=h_{ij}+h\delta_{ij}$, and $\lambda (\{w_{ij}\})$ are the eigenvalues of $\{w_{ij}\}$.
\end{lem}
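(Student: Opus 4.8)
The plan is to follow the standard tensor-maximum-principle strategy for bounding the largest principal radius of curvature along a curvature flow, adapted to the logarithmic speed $\log(\sigma_k f)$. First I would introduce the auxiliary quantity
\[
W(x,t)=\max_{\xi\in T_x\sn,\ |\xi|=1}\ w_{ij}(x,t)\,\xi^i\xi^j,
\]
or, to avoid dealing with the non-smoothness of the maximal eigenvalue directly, work with an auxiliary function of the form $\Phi = \log\lambda_{\max}(\{w_{ij}\}) - A\langle X,X\rangle$ (or $-A h^2$, or $+A\langle v,e\rangle$ for a suitable fixed vector $e$), where $A>0$ is a large constant to be chosen. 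One then assumes the spatial maximum of $\Phi(\cdot,t)$ is attained at an interior point, rotates coordinates so that $\{w_{ij}\}$ is diagonal there with $w_{11}$ the largest eigenvalue, and computes the evolution equation for $w_{11}$ using the flow equation \eqref{GCF22}: differentiating $\partial_t h=\log(\sigma_k f)$ twice covariantly on $\sn$ and commuting derivatives (picking up curvature terms of $\sn$, i.e.\ lower-order terms in $w$), one gets
\[
\partial_t w_{11} = \sigma_k^{ij}\nabla_{ij} w_{11} \;+\; (\text{terms from }\log)\;-\;\sigma_k^{ij,kl}\nabla_1 w_{ij}\nabla_1 w_{kl}/\sigma_k \;+\;\cdots,
\]
where $\sigma_k^{ij}=\partial\sigma_k/\partial w_{ij}$ is the linearized (elliptic, since $\lambda\in\Gamma_k$) operator and the third-derivative term has a favorable sign by the concavity of $\sigma_k^{1/k}$ (Proposition on Concavity). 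The key structural point is that the operator here is $\log\sigma_k$, whose second variation is even more concave than $\sigma_k^{1/k}$, so the bad cubic gradient terms are controlled; the $\log f$ contributes only bounded terms times $\nabla^2 f$, $\nabla f$, $f$, all bounded by the already-established $C^0$ bound $h\le R_0$ and $C^1$ bound.

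Next I would exploit the good negative term. Differentiating $\sigma_k^{ij}w_{ij}=k\sigma_k$ and the Newton–MacLaurin inequalities (the Proposition quoted), at the maximum point the first-order condition $\nabla_m\Phi=0$ gives $\nabla_m w_{11}/w_{11} = 2A\langle X, \partial_m X\rangle$ (bounded since $|X|\le R_0$), which lets us replace gradient-of-$w_{11}$ terms by controlled quantities. The term $-A\,\sigma_k^{ij}\nabla_{ij}\langle X,X\rangle$ coming from the barrier produces, after using $\partial_i X = w_{ij}e_j$ (tangential) and $\partial_t X=\log(\sigma_k f)v$, a term like $-2A\,\sigma_k^{ij}w_{ij}$ plus $-2A\sum\sigma_k^{ij}\langle X,v\rangle w_{ij}\delta_{ij}$-type contributions; using $\sigma_k^{ij}w_{ij}=k\sigma_k$ and the already-proven two-sided bounds $0<c_1\le\sigma_k\le c_2$ (the lower and upper bounds of $\sigma_k$ listed in Section~\ref{Sec4}), this is a strictly negative multiple of $A$, of size $\gtrsim A c_1$. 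Meanwhile the trace $\sum_i\sigma_k^{ii}$ is bounded below by a positive constant times $\sigma_k^{1/k}\cdot(\text{something})$ via Newton–MacLaurin, which after combining with $w_{11}$ large forces the inequality $0\le \partial_t\Phi$ to read $0 \le C(1+R_0) - c\,A\,w_{11}^{?} + (\text{bounded})$, and choosing $A$ appropriately yields a bound $w_{11}\le C(1+R_0)^{3/2}$. The precise exponent $3/2$ in \eqref{PUL} should emerge from balancing the homogeneity of $\sigma_k^{ij}$ (degree $k-1$ in $w$), the trace term, and the barrier; here is exactly where Lemma~\ref{chou-wang} enters, since the lower bound $r_0$ on the inradius together with $R_0$ controls $\sup\lambda$ only up to the factor $R_0^2/r_0$, and one converts $r_0^{-1}$ into a power of $R_0$ using the $C^1$ estimate (support function bounded below away from $0$).

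The main obstacle — and, as the abstract frankly admits, the point where a fully clean argument is missing — is controlling the zeroth- and first-order terms generated by $\log f$: unlike the unweighted Christoffel–Minkowski flow, the barrier term and the $f$-derivatives do not combine into a manifestly negative expression without an extra convexity-type hypothesis on $f$ (something stronger than $\nabla^2_{\sn} f^{1/k}+f^{1/k}I\ge 0$). Concretely, the term $\sigma_k^{ij}\nabla_{ij}\log f$ has no sign in general, and absorbing it requires either a smallness/structure condition on $\log f$ relative to the barrier constant $A$, or a clever choice of the barrier (e.g.\ replacing $\langle X,X\rangle$ by a function built from $f$ itself). I would therefore present the computation up to the point where this term must be dominated, isolate precisely the quantity on $f$ that needs to be bounded, and flag that pinning down the sharp ``clean condition'' on $f$ guaranteeing $\sigma_k^{ij}\nabla_{ij}\log f \le \text{(barrier)}$ is the crux; all the other a priori estimates ($C^0$, $C^1$, lower bound on principal curvature, two-sided bounds on $\sigma_k$) go through and feed into the argument as above.
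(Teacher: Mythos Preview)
Your proposal has two genuine gaps.

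First, it is circular. You invoke ``the already-proven two-sided bounds $0<c_1\le\sigma_k\le c_2$'' to make the barrier term $-A\,\sigma_k^{ij}w_{ij}=-kA\sigma_k$ strictly negative, but in the paper the lower bound on $\sigma_k$ (Lemma~\ref{prin}) is proved \emph{after} the present lemma, and the combined estimate \eqref{KUL} explicitly lists Lemma~\ref{PCUL**} among its inputs. At this stage only $h\le R_0$ and $|\nabla_{\sn}h|\le R_0$ are available, so your barrier mechanism has no floor to stand on.

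Second, you have misidentified which estimate is open. The abstract's ``full proof scheme is missing'' refers to the \emph{upper bound on the principal curvatures} $\kappa_i$ (equivalently, a positive lower bound on the eigenvalues of $w_{ij}$); that is the content of Question~\ref{prin3} in Section~\ref{Sec5}. The present lemma bounds the principal \emph{radii} from above, and the paper proves it unconditionally for any smooth positive $f$. The term $\nabla_{11}\log f$ causes no trouble here: it is simply absorbed as a constant bounded by $\|\log f\|_{C^2}$.

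The paper's actual argument uses no large-parameter barrier. The auxiliary function is
\[
E=\lambda_{\max}(\{w_{ij}\})+|\nabla_{\sn}h|^{2}+h^{2}.
\]
At a maximum, after the Ricci identity and the concavity of $\log\sigma_k$, the parabolic inequality leaves the good term $2\sigma_k^{11}w_{11}^{2}/\sigma_k$ on the left; Newton--MacLaurin in the form $\lambda_1\sigma_{k-1}(\lambda|1)\ge C\sigma_k(\lambda)$ turns this into $\ge Cw_{11}$. On the right one has bounded $f$-terms, $2kh\le 2kR_0$, and $(2h+1)h_t$. The logarithmic speed now helps rather than hurts: since trivially $\sigma_k\le C_{n,k}w_{11}^{k}$, one has $h_t=\log(\sigma_k f)\le C_0+k\log w_{11}$, so the right side is at most $C(1+R_0)(1+\log w_{11})$. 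This gives
\[
w_{11}\le C(1+R_0)\log w_{11}+C(1+R_0),
\]
and $\log w_{11}\le w_{11}^{1/3}$ for large $w_{11}$ yields $w_{11}^{2/3}\le C(1+R_0)$, hence \eqref{PUL}. Lemma~\ref{chou-wang} plays no role in this proof; it is used afterwards (Lemma~\ref{LoUp}) to convert \eqref{PUL} into the non-collapsing estimate.
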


\begin{proof}
To obtain \eqref{PUL}, we are in the position to set the auxiliary function as
\begin{equation}\label{LFun}
E(x,t)=\lambda_{max}(\{w_{ij}\})+|\nabla_{\sn} h|^{2}+h^{2},
\end{equation}
where $\lambda_{max}(\{w_{ij}\})$ is the maximal eigenvalue of $\{w_{ij}\}$.

Now we shall assume that the maximum of $E(x,t)$ is achieved at $(x_{0},t_{0})$ on $\sn\times (0,\infty)$. By virtue of a rotation of coordinates, we may suppose that $\{w_{ij}(x_{0},t_{0})\}$ is diagonal, and $\lambda_{max}(\{w_{ij}\})(x_{0},t_{0})=w_{11}(x_{0},t_{0})$. So, $\eqref{LFun}$ becomes the following form:
\begin{equation}\label{LFun2}
\widetilde{E}(x,t)=w_{11}+|\nabla_{\sn} h|^{2}+h^{2}.
\end{equation}
Since  $\widetilde{E}(x,t)$ attains the maximum at $(x_{0},t_{0})$, it follows that, at $(x_{0},t_{0})$, there is
\begin{equation}
\begin{split}
\label{gaslw1}
0=\nabla_{i}\widetilde{E}&=\nabla_{i}w_{11}+2\sum_{j} h_{j}h_{ji}+2hh_{i}\\
&=(h_{i11}+h_{1}\delta_{1i})+2h_{i}h_{ii}+2hh_{i},
\end{split}
\end{equation}
and we also have
\begin{align}\label{gaslw2}
0&\geq \nabla_{ii}\widetilde{E}=\nabla_{ii}w_{11}
+2\left(\sum_{j} h_{j}h_{jii}+h^{2}_{ii}\right)+2h^{2}_{i}+2hh_{ii}.
\end{align}
Furthermore,
\begin{equation}
\label{gaslw3}
0\leq\partial_{t}\widetilde{E}=\partial_{t}w_{11}+2\sum_{j} h_{j}h_{jt}+2hh_{t}=(h_{11t}+h_{t})+2\sum_{j} h_{j}h_{jt}+2hh_{t}.
\end{equation}
Due to
\begin{equation}\label{gaslw4}
h_{t}=\log \sigma_{k}(x,t)+\log f(x).
\end{equation}
Taking the covariant derivative of \eqref{gaslw4} with respect to $e_{j}$, yields
\begin{align}\label{gaslw6}
h_{tj}&=\sum_{i,k} \frac{\sigma_{k}^{ik}}{\sigma_{k}}\nabla_{j}w_{ik}+\nabla_{j}\log f\notag\\
&=\sum_{i} \frac{\sigma_{k}^{ii}}{\sigma_{k}}(h_{jii}+h_{i}\delta_{ij})+\nabla_{j}\log f,
\end{align}
where $\sigma^{ik}_{k}=\frac{\partial \sigma_{k}}{\partial w_{ik}}$, and
\begin{equation}
\begin{split}
\label{gaslw7}
&h_{11t}=\sum_{i,k}\frac{\sigma_{k}^{ik}}{\sigma_{k}}\nabla_{11}w_{ik}+\sum_{i,k,\beta,s}\frac{\sigma_{k}^{ik,\beta s}}{\sigma_{k}}\nabla_{1}w_{ik}\nabla_{1}w_{\beta s}-\sum_{i,k,\beta,s}\frac{\sigma_{k}^{ik}\sigma_{k}^{\beta s}}{\sigma_{k}^{2}}\nabla_{1}w_{ik}\nabla_{1}w_{\beta s}+\nabla_{11}\log f\\
&=\sum_{i}\frac{\sigma_{k}^{ii}}{\sigma_{k}}\nabla_{11}w_{ii}+\sum_{i,k,\beta,s}\frac{\sigma_{k}^{ik,\beta s}}{\sigma_{k}}\nabla_{1}w_{ik}\nabla_{1}w_{\beta s}-\sum_{i,s}\frac{\sigma_{k}^{ii}\sigma_{k}^{s s}}{\sigma_{k}^{2}}\nabla_{1}w_{ii}\nabla_{1}w_{s s}+\nabla_{11}\log f,
\end{split}
\end{equation}
where $\sigma_{k}^{ik,\beta s}=\frac{\partial^{2}\sigma_{k}}{\partial w_{ik}\partial w_{\beta s}}$.  On the other hand, the Ricci identity on sphere reads
\begin{equation*}
\nabla_{11}w_{ij}=\nabla_{ij}w_{11}-\delta_{ij}w_{11}+\delta_{11}w_{ij}-\delta_{1i}w_{1j}+\delta_{1j}w_{1i}.
\end{equation*}
So, with the aid of the Ricci identity, \eqref{gaslw2}, \eqref{gaslw3}, \eqref{gaslw6}, \eqref{gaslw7} and the concavity of $\log\sigma_{k}(x,t)$, at $(x_{0},t_{0})$, we obtain
\begin{equation}
\begin{split}
\label{gaslw8}
0&\geq \sum_{i}\frac{\sigma_{k}^{ii}}{\sigma_{k}}\nabla_{ii}\widetilde{E}-\partial_{t}\widetilde{E}\\
&=\sum_{i}\frac{\sigma_{k}^{ii}}{\sigma_{k}}\nabla_{ii}w_{11}+2\sum_{i}\frac{\sigma_{k}^{ii}}{\sigma_{k}}(\sum_{j}h_{j}h_{jii}+h^{2}_{ii})+2\sum_{i}\frac{\sigma_{k}^{ii}}{\sigma_{k}}h^{2}_{i}+2\sum_{i}\frac{\sigma_{k}^{ii}}{\sigma_{k}}hh_{ii}\\
&\quad -w_{11t}-2\sum_{j}h_{j}h_{jt}-2hh_{t}\\
&=\sum_{i}\frac{\sigma_{k}^{ii}}{\sigma_{k}}(\nabla_{11}w_{ii}+w_{11}-w_{ii})+2\sum_{j}h_{j}(\sum_{i}\frac{\sigma_{k}^{ii}}{\sigma_{k}}h_{jii}-h_{jt})+2\sum_{i}\frac{\sigma_{k}^{ii}}{\sigma_{k}}h^{2}_{ii}\\
&\quad +2\sum_{i}\frac{\sigma_{k}^{ii}}{\sigma_{k}}h^{2}_{i}+2\sum_{i}\frac{\sigma_{k}^{ii}}{\sigma_{k}}h(w_{ii}-h)-h_{11t}-h_{t}-2hh_{t}\\
&\geq\sum_{i}\frac{\sigma_{k}^{ii}}{\sigma_{k}}\nabla_{11}w_{ii}-h_{11t}+2\sum_{j}h_{j}(-\frac{\sigma_{k}^{jj}}{\sigma_{k}}h_{j}-\nabla_{j}\log f)+2\sum_{i}\frac{\sigma_{k}^{ii}}{\sigma_{k}}(w_{ii}-h)^{2}\\
&\quad +2\sum_{i}\frac{\sigma_{k}^{ii}}{\sigma_{k}}h^{2}_{i}+2\sum_{i}\frac{\sigma_{k}^{ii}w_{ii}}{\sigma_{k}}h-2\sum_{i}\frac{\sigma_{k}^{ii}}{\sigma_{k}}h^{2}-(2h+1)h_{t}\\
&=-\sum_{i,k,\beta,s}\frac{\sigma_{k}^{ik,\beta s}}{\sigma_{k}}\nabla_{1}w_{ik}\nabla_{1}w_{\beta s}+\sum_{i,s}\frac{\sigma_{k}^{ii}\sigma_{k}^{s s}}{\sigma_{k}^{2}}\nabla_{1}w_{ii}\nabla_{1}w_{s s}-\nabla_{11}\log f-2\sum_{j}h_{j}\nabla_{j}\log f\\
&\quad +2\sum_{i}\frac{\sigma_{k}^{ii}w^{2}_{ii}}{\sigma_{k}}-2\sum_{i}\frac{\sigma_{k}^{ii}w_{ii}}{\sigma_{k}}h-(2h+1)h_{t}\\
&\geq 2\sum_{i}\frac{\sigma_{k}^{ii}w^{2}_{ii}}{\sigma_{k}}-\nabla_{11}\log f-2\sum_{j}h_{j}\nabla_{j}\log f-2\sum_{i}\frac{\sigma_{k}^{ii}w_{ii}}{\sigma_{k}}h-(2h+1)h_{t}.
\end{split}
\end{equation}
This implies that
\begin{equation}\label{Fmax}
2\frac{\sigma_{k}^{11}w_{11}^{2}}{\sigma_{k}}\leq 2\sum_{j} h_{j}\nabla_{j}\log f +\nabla _{11}\log f+2\sum_{i}\frac{\sigma_{k}^{ii}w_{ii}}{\sigma_{k}}h+(2h+1)h_{t}.
\end{equation}
 Since $\{w_{ij}\}$ is diagonal at $(x_{0},t_{0})$, and $w_{ii}=\lambda_{i}$, $\forall i=1,\ldots, n-1$. Then we obtain
\begin{equation}\label{F11}
\sigma_{k}^{11}=\sigma_{k-1}(\lambda|1),
\end{equation}
where $\lambda:=(\lambda_{1},\ldots,\lambda_{n-1})$, $\sigma_{k-1}(\lambda|1)$ denotes the $(k-1)$-th symmetric functions with $\lambda_{1}=0$. By using Newton-MacLaurin inequality\cite{CW01}, one has
\begin{equation}\label{Mac}
\left[\frac{\sigma_{k-1}(\lambda|1)}{C^{k-1}_{n-1}}\right]^\frac{1}{k-1}\geq \left[\frac{\sigma_{k}(\lambda|1)}{C^{k}_{n-1}}\right]^\frac{1}{k},
\end{equation}
using \eqref{Mac}, for some dimensional positive constants $C$, we get
\begin{equation}
\begin{split}
\label{Fw11}
\sigma_{k}(\lambda|1)\leq C \sigma_{k-1}(\lambda|1)^\frac{k}{k-1}\leq C\lambda_{1}\sigma_{k-1}(\lambda|1).
\end{split}
\end{equation}
Substituting \eqref{Fw11} into $\sigma_{k}(\lambda)=\sigma_{k}(\lambda|1)+\lambda_{1}\sigma_{k-1}(\lambda|1)$, we have
\begin{equation}\label{1cq}
\lambda_{1}\sigma_{k-1}(\lambda|1)\geq C \sigma_{k}(\lambda).
\end{equation}
So, employing \eqref{1cq}, one see that
\begin{equation}
\begin{split}
\label{maxw11}
\frac{\sigma_{k}^{11}w^{2}_{11}}{\sigma_{k}}&=\frac{\sigma_{k-1}(\lambda|1)\lambda^{2}_{1}}{\sigma_{k}}\geq \frac{C\sigma_{k}\lambda_{1}}{\sigma_{k}}=Cw_{11}.
\end{split}
\end{equation}
Hence, substituting \eqref{maxw11} into \eqref{Fmax}, there is
\begin{equation}\label{CW11}
2Cw_{11}\leq 2\sum h_{j}\nabla_{j}\log f +\nabla _{11}\log f+2kh+(2h+1)h_{t}.
\end{equation}
Due to
\begin{equation*}
\nabla_{j}\log f=\frac{f_{j}}{f},
\end{equation*}
and
\begin{equation*}
\begin{split}
\nabla_{11}\log f&=\frac{ff_{11}-f^{2}_{1}}{f^{2}}.
\end{split}
\end{equation*}
Hence
\begin{equation}
\begin{split}
\label{gaslw9}
&2\sum_{j} h_{j}\nabla_{j}\log f+\nabla_{11}\log f\\
&=2\sum_{j} h_{j}\frac{f_{j}}{f}+\frac{ff_{11}-f^{2}_{1}}{f^{2}}.
\end{split}
\end{equation}
Recall \eqref{GCF22}, at $(x_{0},t_{0})$, it suffices to have
\begin{equation}\label{ht}
 h_{t}\leq C_{0}+C_{1}{\rm log} w_{11}.
 \end{equation}
 Now, substituting \eqref{gaslw9} and \eqref{ht} into \eqref{CW11}, we have
\begin{equation}
\begin{split}
\label{gaslw10}
w_{11}&\leq C(2R_{0}+1)(C_{0}+C_{1}{\rm log}w_{11})+C_{3}R_{0}+C_{4}\\
& \leq \widetilde{C}_{0}+\widetilde{C}_{1}R_{0}+\widetilde{C}_{3}R_{0}{\rm log}w_{11}+\widetilde{C}_{4}{\rm log}w_{11},
\end{split}
\end{equation}
 provided $w_{11}\gg 1$, at $(x_{0},t_{0})$, there is
\begin{equation}
\begin{split}
\label{es}
w^{\frac{2}{3}}_{11}\leq \bar{C}_{0}+\bar{C}_{1}R_{0}.
\end{split}
\end{equation}
From \eqref{es}, it yields
\begin{equation*}
w_{11}\leq C(1+R_{0})^{\frac{3}{2}}
\end{equation*}
for a positive constant $C$, depending only on $||\log f||_{C^2}$, $n$, $k$. Therefore, the proof is completed.
\end{proof}
In conjunction with Lemma \ref{chou-wang} and Lemma \ref{PCUL**}, we get the following non-collapsing estimate.

\begin{lem}\label{LoUp}
 There exists a positive constant $C$, independent of $t$, such that
\[
r(t)\geq \frac{R^{2}(t)}{C(1+R_{0})^\frac{3}{2}}.
\]
\end{lem}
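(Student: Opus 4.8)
The plan is to combine the two preceding lemmas directly. Lemma~\ref{chou-wang} gives a dimensional constant $C$ with
\[
\frac{R^{2}(t)}{r(t)}\leq C\sup_{y\in \partial \Omega_{t}}\lambda_{y;\partial \Omega_{t}},
\]
where the right-hand side is the largest principal radius of curvature of $\partial\Omega_{t}$, i.e.\ the largest eigenvalue of $\{w_{ij}\}=\{h_{ij}+h\delta_{ij}\}$ at time $t$. Lemma~\ref{PCUL**} in turn bounds that quantity: $\lambda(\{w_{ij}\})\leq C(1+R_{0})^{3/2}$ with $C$ independent of $t$. Chaining these two inequalities yields $R^{2}(t)/r(t)\leq C(1+R_{0})^{3/2}$, and rearranging gives exactly $r(t)\geq R^{2}(t)/\big(C(1+R_{0})^{3/2}\big)$, which is the assertion.

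So the proof is essentially one paragraph: apply Lemma~\ref{chou-wang} at a fixed time $t$, observe that $\sup_{y\in\partial\Omega_{t}}\lambda_{y;\partial\Omega_{t}}=\lambda_{\max}(\{w_{ij}(\cdot,t)\})$ under the identification between boundary points and unit normals (valid since $\partial\Omega_{t}$ is smooth and strictly convex, so the Gauss map is a diffeomorphism), then substitute the bound from Lemma~\ref{PCUL**}. One should note that the constant $C$ produced this way is a product of the dimensional constant from Lemma~\ref{chou-wang} and the constant from Lemma~\ref{PCUL**} (which depends only on $\|\log f\|_{C^{2}}$, $n$, $k$), hence is indeed independent of $t$; this is the only point that needs a word of care, and it is immediate.

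There is no real obstacle here: the substantive work has already been done in Lemma~\ref{chou-wang} (the alternate proof via the John ellipsoid and Lemmas~\ref{r1}--\ref{r2}) and in Lemma~\ref{PCUL**} (the maximum-principle argument for the upper bound on $w_{11}$). The closest thing to a subtlety is making sure the $R_{0}$ appearing on the right is the global outer-radius supremum rather than the instantaneous $R(t)$, but that is fine since $R_{0}=\sup_{t}R(t)\geq R(t)$ and $(1+R_{0})^{3/2}$ already absorbs it; likewise $r(t)$ on the left may be replaced by $r_{0}=\inf_{t}r(t)$ only after the $C^{0}$ bounds guarantee $R_{0}<\infty$, but as stated the lemma is a pointwise-in-$t$ inequality and needs nothing beyond the two cited lemmas. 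I would therefore present it as a short corollary-style proof, spelling out the one-line chain of inequalities and the remark on the constant.
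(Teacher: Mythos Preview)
Your proposal is correct and matches the paper's approach exactly: the paper states Lemma~\ref{LoUp} immediately after Lemma~\ref{PCUL**} with only the remark ``In conjunction with Lemma~\ref{chou-wang} and Lemma~\ref{PCUL**}, we get the following non-collapsing estimate,'' and gives no further argument. Your chain of inequalities and the observation about the constant are precisely what is intended.
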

In the spirit of  Lemma \ref{LoUp}, we assert that for any $\theta\in [\theta_{*}, \theta^{*}]$, the inner radii of $\partial \Omega_{t}$ have a uniform positive lower bound and the outer radii are uniformly bound from above. Then the $C^{1}$ of $h$ immediately follows with the aid of the convexity of $h$, i,e., $|\nabla_{\sn}h|\leq \max_{\sn} h$.

Next, we are ready to obtain the uniform lower bound of $\sigma_{k}(x,t)$.

\begin{lem}\label{prin}
 Let $\Omega_{t}$ contain the origin in its interior, and be a smooth, strictly convex solution satisfying the flow \eqref{GCF2} in $\rnnn$. Let $f(x)$ be a smooth and positive function on the unit sphere ${\sn}$. Then there exists a positive constant $\bar{C}$, independent of $t$, such that
\[
\sigma_{k}\geq \bar{C}.
\]

\end{lem}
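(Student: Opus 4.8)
The plan is to prove $\sigma_k\ge\bar C$ by running a maximum principle on a Tso--type auxiliary function. Write $\Theta:=h_t=\log(\sigma_k f)$; if $\Theta\ge0$ throughout $\sn\times(0,\infty)$ then $\sigma_k\ge 1/M$ and there is nothing to do, so assume $\Theta<0$ somewhere. By the $C^0$ estimate we may work with $0<\underline c\le h\le R_0$ (centering, if necessary, so the origin stays deep inside $\Omega_t$; this changes $h$ by a spatially linear term that leaves $w_{ij}=h_{ij}+h\delta_{ij}$ untouched and only contributes a term of lower order in $|q|$ below). Fix $c:=\underline c/2$, so $c\le h-c\le R_0$, and set
\[
Q:=\frac{\Theta}{h-c},\qquad \mathcal L u:=\frac{\sigma_k^{ij}}{\sigma_k}\nabla_{ij}u,\qquad T:=\frac{\sum_i\sigma_k^{ii}}{\sigma_k}=\frac{(n-k)\sigma_{k-1}}{\sigma_k}>0 .
\]
From $\partial_t h=\Theta$ and $\partial_t w_{ij}=\nabla_{ij}\Theta+\Theta\delta_{ij}$ I would first record $\partial_t\Theta=\mathcal L\Theta+T\Theta$, and then, using $\mathcal L h=k-hT$ (which comes from $\nabla_{ij}h=w_{ij}-h\delta_{ij}$ and Euler's relation $\sigma_k^{ij}w_{ij}=k\sigma_k$), a routine computation gives
\[
\partial_t Q=\mathcal L Q+\frac{2}{h-c}\,\frac{\sigma_k^{ij}\nabla_i h\,\nabla_j Q}{\sigma_k}+\frac{(k-cT)Q}{h-c}-Q^2 .
\]

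The key step is to apply Hamilton's trick at a spatial minimum point $x_t$ of $Q(\cdot,t)$ in the regime $q(t):=\min_{\sn}Q(\cdot,t)<0$: there $\nabla Q=0$ and $\mathcal L Q\ge0$, hence $q'(t)\ge \frac{(k-cT)q}{h-c}-q^2=|q|\,\frac{cT-k}{h-c}-|q|^2$, all terms at $(x_t,t)$. The decisive observation is that at $(x_t,t)$ one has $\sigma_k f=e^{\Theta}=e^{(h-c)q}$, so $\sigma_k\le m^{-1}e^{-c|q|}$, while the Newton--Maclaurin inequality (applicable since strict convexity gives $\lambda\in\Gamma_{n-1}\subset\Gamma_k$) yields $\sigma_{k-1}\ge(\sigma_k/C_*)^{(k-1)/k}$, and therefore
\[
T=(n-k)\frac{\sigma_{k-1}}{\sigma_k}\ \ge\ c_1\,\sigma_k^{-1/k}\ \ge\ c_1 m^{1/k}e^{\,c|q|/k},\qquad c_1=c_1(n,k)>0 .
\]
Since the last expression is exponentially large in $|q|$ whereas $k$, $R_0|q|$, and the lower-order correction from recentering are at most linear in $|q|$, there is a constant $A$, depending only on $n,k,m,R_0,\underline c$ and $\|\log f\|_{C^2}$, such that $cT-k\ge R_0|q|$ --- whence $q'(t)\ge0$ --- as soon as $|q(t)|\ge A$.

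It follows by an ODE comparison that $q(t)\ge-\max\{A,-\inf_{\sn}Q(\cdot,0)\}=:-\bar A$ for all $t$, the initial quantity being bounded below uniformly for $\theta$ in the compact interval $[\theta_*,\theta^*]$ (since $\sigma_k(\theta X_0)=\theta^k\sigma_k(X_0)>0$ and $h(\theta X_0,\cdot)>c$ there). Hence $Q\ge-\bar A$ everywhere, so $\Theta=(h-c)Q\ge -R_0\bar A$ (and $\Theta\ge0$ where $Q\ge0$), i.e. $\log(\sigma_k f)\ge -R_0\bar A$, giving $\sigma_k\ge e^{-R_0\bar A}/M=:\bar C>0$. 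I expect the crux to be exactly the sign analysis in the second step: the term $T\Theta$ in the evolution of $\Theta$ is worthless at a negative spatial minimum (it drives the minimum down at an uncontrolled rate), and it is the Tso denominator $h-c$ with $c>0$ \emph{strictly} that converts it into $\frac{(k-cT)Q}{h-c}$, whose favorable sign for large $|q|$ rests on the Newton--Maclaurin bound $T\gtrsim\sigma_k^{-1/k}$ outrunning $|\Theta|\sim|\log\sigma_k|$, after which the quadratic $-Q^2$ is absorbed. I would also note in passing that the companion upper bound $\sigma_k\le C$ is immediate from Lemma~\ref{PCUL**}, since $\sigma_k\le\binom{n-1}{k}\lambda_{\max}(\{w_{ij}\})^k$.
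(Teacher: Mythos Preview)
Your proposal is correct and is essentially the paper's own argument: both use the Tso-type quotient $Q=\pm h_t/(h-\text{linear}-\text{const})$, run the maximum principle at the extremal point, and close via Newton--Maclaurin in the form $\sum_i\sigma_k^{ii}\ge c\,\sigma_k^{(k-1)/k}$, which makes $T\gtrsim\sigma_k^{-1/k}\gtrsim e^{c|q|/k}$ overwhelm the competing quadratic term $Q^2$ once $|q|$ is large. The only cosmetic difference is that the paper makes the time-dependent recentering explicit by putting the Steiner point $\zeta(t)$ in the denominator and carrying the $\zeta'(t)$-term through the computation, whereas you handle it parenthetically; your remark that this correction is of lower order in $|q|$ (at most quadratic, hence still beaten by the exponential) is correct and is exactly how the paper absorbs that term into $C_3Q^2$.
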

\begin{proof}
Set
\begin{equation}\label{}
\zeta(t)=\frac{1}{\omega_{n}}\int_{\sn}xh(x,t)dx
\end{equation}
be the Steiner point of $\Omega_{t}$. By means of Lemma \ref{LoUp},  there exists a positive $\varepsilon_{0}$, which depends only on $n$, $r_{0}$ and $R_{0}$, independent of $t$,  such that
\[
h(x,t)-\zeta(t)\cdot x\geq 2\varepsilon_{0}.
\]
Now establishing the auxiliary function as
\begin{equation}\label{AF1}
Q(x,t)=\frac{-{\rm log}\sigma_{k}-{\rm log} f}{h-\zeta(t)\cdot x-\varepsilon_{0}}=\frac{-h_{t}}{h-\zeta(t)\cdot x-\varepsilon_{0}}.
\end{equation}
For any fixed $t\in(0,+\infty)$, suppose that the (positive) maximum of $Q(x,t)$ is achieved at $x_{0}$. Rotate the axes so that $\{w_{ij}\}$ is diagonal at $x_{0}$. Thus, we get that at $x_{0}$,
\begin{equation}\label{Up1}
0=\nabla_{i}Q=\frac{-h_{ti}}{h-\zeta_{0}-\varepsilon_{0}}+\frac{h_{t}(h_{i}-\zeta_{i})}{(h-\zeta_{0}-\varepsilon_{0})^{2}},
\end{equation}
where $\zeta_{i}:=\zeta\cdot e_{i}$, and $\zeta_{0}=\zeta\cdot x_{0}$.

Then, applying \eqref{Up1}, at $x_{0}$, we also have
\begin{equation}
\begin{split}
\label{Up2}
0\geq\nabla_{ij}Q&=\frac{-h_{tij}}{h-\zeta_{0}-\varepsilon_{0}}+\frac{h_{ti}(h_{j}-\zeta_{j})+h_{tj}(h_{i}-\zeta_{i})+h_{t}(h_{ij}+\zeta_{0}\delta_{ij})}{(h-\zeta_{0}-\varepsilon_{0})^{2}}-\frac{2h_{t}(h_{i}-\zeta_{i})(h_{j}-\zeta_{j})}{(h-\zeta_{0}-\varepsilon_{0})^{3}}\\
&=\frac{-h_{tij}}{h-\zeta_{0}-\varepsilon_{0}}+\frac{h_{t}h_{ij}}{(h-\zeta_{0}-\varepsilon_{0})^{2}}+\frac{h_{t}\zeta_{0}\delta_{ij}}{(h-\zeta_{0}-\varepsilon_{0})^{2}}.
\end{split}
\end{equation}
 \eqref{Up2} implies
\begin{equation}
\begin{split}
\label{Up3}
-h_{tij}-h_{t}\delta_{ij}&\leq - \frac{h_{t}h_{ij}}{h-\zeta_{0}-\varepsilon_{0}}-\frac{h_{t}\zeta_{0}\delta_{ij}}{(h-\zeta_{0}-\varepsilon_{0})}-h_{t}\delta_{ij}\\
&=\frac{-h_{t}}{h-\zeta_{0}-\varepsilon_{0}}[h_{ij}+(h-\zeta_{0}-\varepsilon_{0})\delta_{ij})]+Q\zeta_{0}\delta_{ij}\\
&=Q(w_{ij}-\varepsilon_{0}\delta_{ij}-\zeta_{0}\delta_{ij})+Q\zeta_{0}\delta_{ij}\\
&=Q(w_{ij}-\varepsilon_{0}\delta_{ij}).
\end{split}
\end{equation}
On the other hand,  at $x_{0}$, we have
\begin{equation}
\begin{split}
\label{s2k} \partial_{t}Q&=\frac{-h_{tt}}{h-\zeta_{0}-\varepsilon_{0}}+\frac{h^{2}_{t}}{(h-\zeta_{0}-\varepsilon_{0})^{2}}-\frac{h_{t}\frac{d\zeta_{0}}{dt}}{(h-\zeta_{0}-\varepsilon_{0})^{2}}\\
&=\frac{1}{h-\zeta_{0}-\varepsilon_{0}}\left(\sigma_{k}\frac{ \partial \sigma^{-1}_{k}}{\partial t}\right)+Q^{2}-\frac{h_{t}\frac{d\zeta_{0}}{dt}}{(h-\zeta_{0}-\varepsilon_{0})^{2}}.
\end{split}
\end{equation}
Then, we derive
\begin{equation}
\begin{split}
\label{sigmak41}
\frac{\partial \sigma_{k}^{-1}}{\partial t}&=-\sigma^{-2}_{k}\sum_{i,j}\frac{\partial \sigma_{k}}{\partial w_{ij}}(h_{tij}+h_{t}\delta_{ij})\\
&\leq \sigma^{-2}_{k}\sum_{i,j}\frac{\partial \sigma_{k}}{\partial w_{ij}}(w_{ij}-\varepsilon_{0}\delta_{ij})Q\\
&=\sigma^{-2}_{k}\left(k\sigma_{k}-\varepsilon_{0}\sum_{i} \sigma^{ii}_{k}\right)Q\\
&\leq (k\sigma^{-1}_{k}-C\varepsilon_{0}\sigma^{-1-\frac{1}{k}}_{k})Q.
\end{split}
\end{equation}
The last inequality we use Newton-MacLaurin inequality $\sum_{i}\sigma^{ii}_{k}\geq C\sigma^{1-\frac{1}{k}}_{k}$, where $C$ is a positive constant depending only on $n$ and $k$. Applying \eqref{sigmak41} into \eqref{s2k}, we get
\begin{equation}
\begin{split}
\label{ODE}
\partial_{t}Q&\leq \frac{k}{h-\zeta_{0}-\varepsilon_{0}}Q-\frac{C\varepsilon_{0}}{h-\zeta_{0}-\varepsilon_{0}}\sigma_{k}^{-\frac{1}{k}}Q+Q^{2}+\frac {|h_{t}||\frac{d \zeta_{0}}{dt}|}{(h-\zeta_{0}-\varepsilon_{0})^{2}}\\
&\leq C_{1}Q-C_{2}Qe^{\frac{\varepsilon_{0}Q}{k}}+C_{3}Q^{2}<0
\end{split}
\end{equation}
providing $Q\gg 1$ for some positive $C_{1},C_{2},C_{3}$. Therefore, \eqref{ODE} implies that
\[
Q(x_{0},t)\leq C
\]
for $C>0$. This implies that
\[
\sigma_{k}\geq \bar{C}
\]
for $\bar{C}>0$, independent of $t$, depending only on $||\log f||_{C^{2}}$, $n$, $k$. The proof is completed.
\end{proof}

Combining Lemmas \ref{PCUL**}, \ref{LoUp} and \ref{prin}, it suffices to obtain the uniform lower and upper bounds of $\sigma_{k}(x,t)$, as
\begin{equation}\label{KUL}
\tilde{C}_{1}\leq \sigma_{k}\leq \tilde{C}_{2}
\end{equation}
for positive $\tilde{C}_{1}$ and $\tilde{C}_{2}$, depending only on $||\log f||_{C^{2}}$, $n$, $k$.

\section*{Acknowledgment}The authors would like to thank their supervisor Prof. Yong Huang for valuable comments regarding the exposition of this paper. Also grateful for Prof. Jian Lu for providing illuminating thoughts on the proof of Lemma \ref{chou-wang}.

\end{document}